\providecommand{\U}[1]{\protect\rule{.1in}{.1in}}
\newtheorem{theorem}{Theorem}[section]
\newtheorem{lemma}{Lemma}[section]
\newtheorem{corollary}{Corollary}[section]
\theoremstyle{definition}
\theoremstyle{remark}
\newtheorem{remark}{Remark}[section]
\newtheorem{example}{Example}[section]
\numberwithin{equation}{section}
\begin{document}
    \begin{frontmatter}

\title{Singular mean-field backward  stochastic  Volterra integral equations in infinite dimensional spaces
}



\author[1]{Javad A. Asadzade}
\ead{javad.asadzade@emu.edu.tr}
\author[1,2]{Nazim I. Mahmudov}
\ead{nazim.mahmudov@emu.edu.tr}

\address [1] {Department of Mathematics, Eastern Mediterranean University, Mersin 10, 99628, T.R. North Cyprus, Turkey}
\address [2] {Research Center of Econophysics, Azerbaijan State University of Economics (UNEC), Istiqlaliyyat Str. 6, Baku 1001,Azerbaijan}



\begin{abstract}
This paper investigates the well-posedness of singular mean-field backward stochastic Volterra integral equations (MF-BSVIEs) in infinite-dimensional spaces. We consider the equation: 

\[X(t) = \Psi(t) + \int_t^b P\big(t, s, X(s), \aleph(t, s), \aleph(s, t), \mathbb{E}[X(s)], \mathbb{E}[\aleph(t, s)], \mathbb{E}[\aleph(s, t)]\big) ds - \int_t^b \aleph(t, s) dB_s,
\]

where the focus lies on establishing the existence and uniqueness of adapted M-solutions under appropriate conditions. A key contribution of this work is the development of essential lemmas that provide a rigorous foundation for analyzing the well-posedness of these equations. In addition, we extend our analysis to singular mean-field forward stochastic Volterra integral equations (MF-FSVIEs) in infinite-dimensional spaces, demonstrating their solvability and unique adapted solutions. Finally, we strengthen our theoretical results by applying them to derive stochastic maximum principles, showcasing the practical relevance of the proposed framework. These findings contribute to the growing body of research on mean-field stochastic equations and their applications in control theory and mathematical finance.
\end{abstract}

\begin{keyword}
 Singular kernel; Mean-field Backward stochastic Volterra integral equations; Mean-field Forward stochastic Volterra integral 
equations; Maximum principle
\end{keyword}

\end{frontmatter}

\section{INTRODUCTION}
	
	\label{Sec:intro}
Backward Stochastic Differential Equations (BSDEs) and Backward Volterra Integral Equations (BVIEs) are two closely related classes of equations that have become fundamental tools in modern stochastic analysis, providing powerful frameworks for addressing complex problems in various domains. BSDEs, introduced by Jean-Michel Bismut  in 1973 \cite{Bismut} and later extended by Étienne Pardoux and Shige Peng in 1990 \cite{Pardoux}, model systems that evolve backward in time, starting from a known terminal condition and determining the system’s evolution backward to an initial time. These equations are particularly useful in areas such as optimal control, mathematical finance, and the nonlinear Feynman-Kac formula \cite{Ma}, where the future state is specified and the dynamics leading to it need to be understood.

While BSDEs capture the backward evolution of systems, BVIEs extend this concept by incorporating integral terms that reflect the memory or history of the system. This inclusion allows BVIEs to model systems where the future state is influenced not only by the current state but also by the entire past trajectory, making them invaluable in applications involving delayed feedback, memory effects, and time-dependent processes. The connection between BSDEs and BVIEs enables the modeling of more complex systems, particularly in stochastic control problems and mathematical finance, where the path of a process up to the present moment significantly influences its future outcomes.
Numerous researchers have significantly contributed to the theory of backward stochastic Volterra integral equations (BSVIEs). Notable contributors include Lin, Yong, Anh, Grecksch, Wang, Zheng, and Hamaguchi, among others (for further details, see \cite{Ahmadova2, Anh, Gilding, Hamaguchi, Hamaguchi2, Lin, Mahmudov, Mahmudov2, Peng, Wang, Wang2, Yong, Yong2}).

Yong \cite{Yong2} introduced BSVIEs in the subsequent form:  
\begin{align}\label{p1}
X(t) = \rho(t) - \int_t^b h(t, s, X(s), \aleph(t, s), \aleph(s, t)) ds - \int_t^b \aleph(t, s)  dB_s, \quad t \in [0, b].  
\end{align}  
This representation exhibits unique features, including the dependence of \(\aleph(t, s)\) on \(t\) and the drift term's interaction with both \(\aleph(t, s)\) and \(\aleph(s, t)\). Later, in \cite{Yong}, Yong explored the well-posedness and regularity of adapted M-solutions for BSVIEs. It's worth noting that the results in \cite{Yong} differ from those in the present work. For example, Theorem 3.7 in \cite{Yong} is based on different assumptions for \(f\) and \(g\), where the Lipschitz coefficients depend on \(t\) and \(s\). In contrast, this work assumes these coefficients are positive constants.

More recently, Wang and Yong \cite{Wang2} studied BSVIEs and demonstrated how a specific case could be simplified to the subsequent form:  
\begin{align}\label{p2}
X(t) = \xi + \int_t^b g(s, X(s), \aleph(t, s)) ds - \int_t^b \aleph(t, s)  dB_s, \quad t \in [0, b].  
\end{align}  
Here, the solution \((X(\cdot), \aleph(\cdot, \cdot))\) consists of stochastic processes. Equation \eqref{p2} can be compared to the integral formulation of a backward stochastic differential equation (BSDE):  
\begin{align}\label{p3}
X(t) = \xi + \int_t^b g(s, X(s), \aleph(s)) ds - \int_t^b \aleph(s)  dB_s, \quad t \in [0, b].  
\end{align}  
If Equation \eqref{p2} has a unique \(\mathcal{F}_t\)-adapted solution \((X(\cdot), \aleph(\cdot))\), this solution also satisfies Equation \eqref{p1}, where \(\aleph(t, s) = \aleph(s)\). Thus, BSVIEs naturally extend the classical framework of BSDEs.

A crucial extension of both BSDEs and BVIEs is the concept of mean field interactions. Mean field theory is important because it captures the behavior of large systems composed of many interacting components, where each component’s behavior is influenced by the average state of the system as a whole. In stochastic systems, this leads to models where the evolution of an individual unit is influenced not only by its own state but also by the collective influence of all other units in the system. This is particularly relevant in fields such as population dynamics, large-scale financial markets, and social systems, where interactions between agents cannot be neglected.

The introduction of mean field interactions into BSDEs and BVIEs allows for more realistic and scalable models that account for the collective behavior of many interacting entities. In optimal control problems and finance, for example, mean field game theory provides insights into the optimal strategies of individual agents who are aware of and influenced by the average behavior of others. This approach leads to a deeper understanding of equilibrium dynamics in systems with a large number of agents, enriching the analysis and offering solutions that would be difficult to achieve with individual agent-based models alone.

Together, the integration of BSDEs, BVIEs, and mean field theory provides a comprehensive framework for solving nonlinear problems and optimizing decision-making strategies under uncertainty, particularly in complex, large-scale systems. Their applications span a wide range of fields, from financial portfolio optimization and derivative pricing to the design of control systems and the modeling of dynamical processes in uncertain environments. The combination of these tools is essential for addressing real-world challenges where past, present, and future events are intricately connected, and where the collective behavior of many interacting components plays a significant role in shaping the system's evolution.

The major contributions to the theory of MF-BSVIEs and MF-BSDEs have been made by researchers such as Buckdahn, R., Djehiche, B., Li, J., Peng, S., Agram, N., Hu, Y., Øksendal, B., Andersson, D., and others (for additional references, see \cite{Ahmadova,  Agram, Andersson, Buckdahn,Buckdahn2, Jaber, Li, Shi, Shi1,  Wang3, Wang4,  Wu, Yang, Zhu}). 

For instance, in \cite{Buckdahn}, the authors explore a specific mean-field problem using a purely stochastic framework. They analyze the solution \((Y, Z)\) of a mean-field BSDE driven by a forward stochastic differential equation (SDE) of McKean–Vlasov type, with solution \(X\). A key focus of their work is the approximation of this system by the solutions \((X^N, Y^N, Z^N)\) of decoupled forward-backward equations, where the coefficients depend on \(N\) independent copies of \((X^N, Y^N, Z^N)\). They establish that the rate of convergence for this approximation is of order \(1/\sqrt{N}\).
Moreover, their innovative approach to approximation enables a detailed characterization of the asymptotic behavior of \(\sqrt{N}(X^N - X, Y^N - Y, Z^N - Z)\). They prove that this triplet converges in distribution to the solution of a forward-backward stochastic differential equation of mean-field type. This limiting equation is influenced not only by a Brownian motion but also by an independent Gaussian field, highlighting the intricate interplay of randomness in such systems.

On the other hand, in \cite{Buckdahn2}, the authors establish several fundamental results for mean-field BSDEs. They prove the existence and uniqueness of solutions, along with the comparison and converse comparison theorems. They also analyze McKean–Vlasov SDEs and investigate decoupled mean-field Forward–Backward SDEs (FBSDEs). 
The associated value function \(u\) is shown to be a deterministic function that is Lipschitz continuous with respect to \(x\) and \(1/2\)-Hölder continuous in \(t\). Moreover, \(u\) satisfies the Dynamic Programming Principle (DPP), a critical property proved using Peng’s backward semigroups \cite{Peng}. A modified definition of these semigroups is introduced, simplifying the argument for showing that \(u\) is a viscosity solution of the associated partial differential equation (PDE). Finally, the uniqueness of the viscosity solution is demonstrated within the space of continuous functions with polynomial growth.
\bigskip

Recently, latest main contributions for singular BSVIEs in infinite-dimensional spaces was given by Wang and Zeng \cite{Wang}. They introduce the concept of singular BSVIEs and establish their well-posedness by addressing various singularity conditions. These conditions encompass a wide range of kernels, including fractional kernels, Volterra Heston model kernels, and completely monotone kernels. They also provide fresh perspectives on forward stochastic Volterra integral equations. Inspired by challenges in mathematical physics, such as the viscoelasticity and thermoviscoelasticity of materials, as well as heat conduction in materials with memory, these insights find applications in optimal control problems. 
Such problems extend to abstract stochastic Volterra integral equations, fractional stochastic evolution equations, and stochastic evolutionary integral equations. Furthermore, the framework of BSVIEs proves instrumental in addressing the maximum principle for controlled stochastic delay evolution equations. 
One notable advantage of this framework is its ability to naturally incorporate past states into the cost functional. This capability offers a more flexible and comprehensive approach to managing control problems that are influenced by memory effects, thus enriching the analysis and solutions in these complex systems.
\bigskip

Inspired by this remarkable result, we address the question in this article of whether \textbf{it is possible to obtain an analogue of these results for the mean-field}, which is the key focus mentioned earlier. To explore this, we consider the subsequent singular MF-BSVIEs \eqref{r0}. Such that,  we fix \( b > 0 \) and consider two separable Hilbert spaces, \( H \) and \( V \). Let \( L_2^0 \coloneqq L_2(V; H) \). We work on a complete filtered probability space \( (\Omega, \mathcal{F}, \{\mathcal{F}_t\}_{t \in [0,b]}, \mathbb{P}) \), where a \( V \)-valued cylindrical Brownian motion \( B(\cdot) \) is defined. The filtration \( \{\mathcal{F}_t\} \) is the natural filtration of \( B(\cdot) \), augmented with all \( \mathbb{P} \)-null sets in \( \mathcal{F} \).

	\begin{align}\label{r0}
X(t)&=\Psi(t)+\int_{t}^{b}P(t,s,X(s),\aleph(t,s),\aleph(s,t),\mathbb{E}\big[X(s)\big], \mathbb{E}\big[\aleph(t,s)\big], \mathbb{E}\big[\aleph(s,t)\big])ds-\int_{t}^{b}\aleph(t,s)dB_{s}.
	\end{align}
where the mappings \( \Psi(\cdot) \) and \( P(\cdot) \), called the free term and generator of equation \eqref{r0}, are given functions valued in \( H \), satisfying certain singular assumptions to be outlined later. Our objective is to determine an adapted pair of processes \( (X(\cdot), \aleph(\cdot, \cdot)) \) that fulfill equation \eqref{r0} in the conventional Itô framework.

\bigskip
    To begin with, let us examine the subsequent significant example, which will play a crucial role in deriving key results and insights in the forthcoming sections.
    
\begin{example}
Consider the subsequent Caputo fractional MF-BSDEs of order \(\gamma \in \left(\frac{1}{2}, 1\right)\), defined on the interval \([0, b]\):

\begin{equation}\label{eq1}
\begin{cases}
\big({^{C}_{t}}D^{\gamma}_{b} x\big)(t) = Ax(t) -  \rho\big(t, x(t), z(t, s), z(s, t), \mathbb{E}[x(t)], \mathbb{E}[z(t, s)], \mathbb{E}[z(s, t)]\big) - z(t, s)\frac{dB_{t}}{dt}, \\
x(b) = \xi,
\end{cases}
\end{equation}

where
\begin{equation}
{^{C}_{t}}D^{\gamma}_{b} x(t) \triangleq -\frac{1}{\Gamma(1-\gamma)} \int_{t}^{b} \frac{x'(s)}{(s-t)^{\gamma}} ds.
\end{equation}

Here, \(A\) is a constant matrix, and \(B(\cdot)\) represents a standard \(m\)-dimensional Brownian motion on a probability space \((\Omega, \mathcal{F}, \{\mathcal{F}_t\}_{t \geq 0}, \mathbb{P})\), where \(\{\mathcal{F}_t\}\) is a filtration satisfying the usual conditions. The term \(\frac{dB}{dt}\) is interpreted as white noise, representing the generalized derivative of Brownian motion. Additionally:
\begin{itemize}
    \item \(\xi \in L^2(\mathcal{F}_b; \mathbb{R}^n)\),
    \item \(\rho : [0, b] \times \mathbb{R}^n \times \mathbb{R}^{n \times m} \times \mathbb{R}^n \times \mathbb{R}^{n \times m} \to \mathbb{R}^n\) is a measurable function satisfying \(\rho(\cdot, 0, 0, 0, 0) \in L^2(0, b; \mathbb{R}^n)\).
\end{itemize}

For all \(x_1, x_2, \bar{x}_1, \bar{x}_2 \in \mathbb{R}^n\), \(z_1, z_2, \bar{z}_1, \bar{z}_2 \in \mathbb{R}^{n \times m}\), and \(t \in [0, b]\), there exists a constant \(L > 0\) such that:
\begin{align*}
&\quad \big\vert \rho(t, x_1, z_1, \xi_1, \bar{x}_1, \bar{z}_1, \bar{\xi}_1) -  \rho(t, x_2, z_2, \xi_2, \bar{x}_2, \bar{z}_2, \bar{\xi}_2) \big\vert^2 \\
& \leq L \Big( \vert x_1 - x_2 \vert^2 + \vert z_1 - z_2 \vert^2 + \vert \xi_1 - \xi_2 \vert^2 + \vert \bar{x}_1 - \bar{x}_2 \vert^2 + \vert \bar{z}_1 - \bar{z}_2 \vert^2 + \vert \bar{\xi}_1 - \bar{\xi}_2 \vert^2 \Big).
\end{align*}

A mild solution of the  \eqref{eq1} is given by:
\begin{equation}\label{r1}
x(t) = \xi + \frac{1}{\Gamma(\gamma)} \int_{t}^{b} \frac{\tilde{\rho}(s) - Ax(s)}{(s-t)^{1-\gamma}} ds - \frac{1}{\Gamma(\gamma)} \int_{t}^{b} \frac{z(t, s)}{(s-t)^{1-\gamma}} dB_s,
\end{equation}
where:
\[
\tilde{\rho}(s) = f\big(s, x(s), z(t, s), z(s, t), \mathbb{E}[x(s)], \mathbb{E}[z(t, s)]\big).
\]

By defining new variables:
\begin{align*}
\begin{cases}
X(t) \triangleq x(t), \quad \aleph(t, s) \triangleq \frac{1}{\Gamma(\gamma)} (s-t)^{\gamma-1} z(t, s), \quad \aleph(s, t) \triangleq z(s, t), \\
\widehat{\rho}(s) = \rho\big(s, X(s), \frac{\Gamma(\gamma)}{(s-t)^{\gamma-1}} \aleph(t, s), \aleph(s, t), \mathbb{E}[X(s)], \mathbb{E}[\frac{\Gamma(\gamma)}{(s-t)^{\gamma-1}} \aleph(t, s)], \mathbb{E}[\aleph(s, t)]\big), \\
\widehat{P}(t, s) = \frac{1}{\Gamma(\gamma)} (s-t)^{\gamma-1} \big[\widehat{\rho}(s) - AX(s)\big],
\end{cases}
\end{align*}
the mild solution\eqref{r1} can be expressed as a special case of a BSVIE with:
\begin{align}\label{r2}
\begin{cases}
\Psi(t) \triangleq \xi, \\
\widehat{P}(t, s) \triangleq \frac{1}{\Gamma(\gamma)} (s-t)^{\gamma-1} \big[\widehat{\rho}(s) - AX(s)\big].
\end{cases}
\end{align}
\end{example}

The example provided above expresses the singular part of the article, while now we will look at the subsequent example to clarify the infinite-dimensional part of the article. Thus, the motivation for the research in the article comes from mathematical physics. To achieve this, we introduce the subsequent example of MF-BSVIEs \eqref{r0}, which are relevant to stochastic evolutionary integral equations that model phenomena such as viscoelasticity, thermoviscoelasticity in materials, incompressible fluids, and heat conduction in materials with memory. 

\begin{example}
We initiate our discussion with the framework of forward semilinear stochastic evolutionary integral equations. Consider a Hilbert space \( H \), and let \( A \) denote a densely defined, closed, and unbounded linear operator on \( H \) with domain \( D(A) \). Furthermore, let \( c \in L^1(0, b; \mathbb{R}^+) \) be a scalar-valued kernel function, representing a temporally dependent coefficient integral to the dynamics of the system. Consider
\begin{align}\label{30}
    Y(t)=y_{0}-\int_{0}^{t}c(t-s)AY(s)ds+\int_{0}^{t}\vartheta(s,Y(s), \mathbb{E}Y(s))ds+\int_{0}^{t}\Lambda(s,Y(s), \mathbb{E}Y(s))dB_{s},\, t\in[0,b],
\end{align}
where \( \vartheta : [0, b] \times H\times H \to H \) and \( \Lambda : [0, b] \times H \times H\to L_0 \) are Lipschitz continuous with linear growth. As will be demonstrated later, with different forms of \( c(\cdot) \), such equations find interesting applications in mathematical physics.

To analyze the well-posedness of \eqref{30}, we adopt the notion of a resolvent as introduced in \cite{Prüss}. A family of bounded linear operators \( \{T(t)\}_{t \geq 0} \) on the Hilbert space \( H \) is termed a resolvent for \eqref{30} if it meets the subsequent criteria: the mapping \( t \mapsto T(t) \) is strongly continuous, commutes with the operator \( A \), satisfies the initial condition \( T(0) = I \), and adheres to the resolvent equation given below:
\begin{align}\label{31}
    T(t)y=y-\int_{0}^{t}c(t-s)AT(s)yds,\, y\in D(A),\, t\in [0,b].
\end{align}
For a comprehensive treatment of the resolvent concept, we direct the reader to [\cite{Prüss}, Theorem 3.1, 3.2, 4.2, 4.3, and 4.4 in Chapter 5]. Once the bounded resolvent is constructed, a (mild) solution of \eqref{30} is characterized as a function \( Y(\cdot) \) that fulfills the associated stochastic Volterra integral equation outlined below:
\begin{align}\label{32}
    Y(t)=T(t)y_{0}+\int_{0}^{t}T(t-s)\vartheta(s,Y(s), \mathbb{E}Y(s))ds+\int_{0}^{t}T(t-s)\Lambda(s,Y(s), \mathbb{E}Y(s))dB_{s}
\end{align}
It can be considered a special case of \eqref{4.1} with:
\begin{align}\label{33}
    \begin{cases}
        \varphi(t)=T(t)y_{0},\\
        \Phi(s,Y(s), \mathbb{E}Y(s))=  T(t-s)\vartheta(s,Y(s), \mathbb{E}Y(s)),\\
        \Psi(s,Y(s), \mathbb{E}Y(s))=  T(t-s)\Lambda(s,Y(s), \mathbb{E}Y(s))
    \end{cases}
\end{align}
As indicated in \cite{Shi} and \cite{Yong}, when studying the optimal control problems for \eqref{33}, the subsequent kind 
of linear BSVIE is required:
\begin{align}\label{34}
    Y(t)&=\varphi(t)+\int_{t}^{b}[N^{*}_{1}(t)T^*(s-t)Y(s)+N^{*}_{2}(t)T^*(s-t)\aleph(s,t)]ds\nonumber\\
    &+\mathbb{E}\int_{t}^{b}[N^{*}_{3}(t)T^*(s-t)Y(s)+N^{*}_{4}(t)T^*(s-t)\aleph(s,t)]ds\\
    &-\int_{t}^{b} \aleph(t,s)dB_{s},\, t\in [0,b],\nonumber
\end{align}
with appropriate operator-valued functions \( N_1 , N_2, N_{3}\) and \( N_4 \). Clearly, equation (1.7) can be seen as a special case of \eqref{r0}.
\end{example}

\bigskip
The above discussion highlights the significance and intrigue of studying singular MF-BSVIEs in infinite-dimensional spaces, both from theoretical and applied perspectives.

\bigskip

The structure of the article is organized as follows to ensure a comprehensive exploration of the topic: 
\bigskip

 Section 2 introduces important foundational concepts and preliminary results. These are essential for understanding the main analysis later in the article.  In Section 3, we focus on the well-posedness of singular MF-BSVIEs. We explain the analytical challenges caused by their singular nature and describe the conditions needed for solutions to exist and be unique.  Next, Section 4 discusses singular MF-FSVIEs. It provides a similar analysis to Section 3 and highlights the connection between forward and backward systems.  
Finally, Section 5 explores how these theories can be applied to optimal control problems. We specifically consider cases with convex control regions. This section also explains how to set up the control problems and derives key conditions for finding optimal solutions.
 
\section{Mathematical backgraound}
 In this section, we present fundamental concepts that are crucial for the subsequent parts of the article, along with the functional spaces that form the central focus of the discussion.
 \bigskip
 
Let \( V \) and \( H \) be two separable Hilbert spaces. The space of all bounded linear operators mapping \( V \) to \( H \) is denoted by \( L(V; H) \), while \( L_2^0 \) represents the space of Hilbert-Schmidt operators from \( V \) to \( H \). Specifically, 

\[
L_2^0 \coloneqq \{T_{1} \in L(V; H) \mid \sum_{i=1}^\infty |T_{1}e_i|_H^2 < \infty \},
\]

where \( \{e_i\}_{i=1}^\infty \) is an orthonormal basis of \( V \). It can be demonstrated that \( L_2^0 \), when endowed with the inner product 

\[
\langle T_{1}, T_{2} \rangle_{L_2^0} \coloneqq \sum_{i=1}^\infty \langle T_{1}e_i, T_{2}e_i \rangle_H \quad \forall T_{1}, T_{2} \in L_2^0,
\]

forms a separable Hilbert space.
\bigskip

Next, we define triangular domains that will play a crucial role in our analysis. The first triangular domain is given by

\[
\triangle \coloneqq \{(t, s) \in [0, b]^2 \mid 0 \leq s < t \leq b \},
\]

which consists of all points \((t, s)\) in the square \([0, b]^2\) where the second coordinate \(s\) is strictly less than the first coordinate \(t\), and both \(t\) and \(s\) lie within \([0, b]\). For any fixed \(r \in [0, b)\), a restricted version of this domain is defined as 

\[
\triangle_{[r, b]} \coloneqq \{(t, s) \in [r, b]^2 \mid 0 \leq r \leq s < t \leq b \},
\]

which represents the subset of \(\triangle\) where both coordinates \(t\) and \(s\) are further constrained to lie in the interval \([r, b]\) with \(r \leq s < t\).

Similarly, we introduce a second triangular domain, denoted by 

\[
\triangle^* \coloneqq \{(t, s) \in [0, b]^2 \mid 0 \leq t < s \leq b \},
\]

which includes all points \((t, s)\) in \([0, b]^2\) where \(t\) is strictly less than \(s\), and both values fall within the interval \([0, b]\). For a fixed \(r \in [0, b)\), the corresponding restricted domain is defined as 

\[
\triangle^*_{[r, b]} \coloneqq \{(t, s) \in [r, b]^2 \mid 0 \leq r \leq t < s \leq b \}.
\]

This domain consists of pairs \((t, s)\) where \(t < s\) and both coordinates are within \([r, b]\), with the additional condition that \(r \leq t\). These triangular regions help delineate the relationships between the two variables \(t\) and \(s\) and provide structured domains for integration or other operations.
\bigskip

We begin by defining the space of \( H \)-valued square-integrable random variables as 

\[
L^2_{\mathcal{F}_t}(\Omega; H) \coloneqq \{\xi: \Omega \to H \mid \xi \text{ is } \mathcal{F}_t\text{-measurable, } \|\xi\|_2 \coloneqq \big(\mathbb{E}|\xi|_H^2\big)^{1/2} < \infty\}.
\]

This space consists of random variables taking values in \( H \), measurable with respect to the \( \sigma \)-algebra \( \mathcal{F}_t \), and having finite \( H \)-norm in expectation. It is evident that \( L^2_{\mathcal{F}_t}(\Omega; H) \) is a Banach space when equipped with the norm \( \|\cdot\|_2 \).

Next, we turn our attention to defining the functional spaces related to stochastic processes. Unless stated otherwise, we assume all stochastic processes \( \psi(t, \omega) \) to be at least measurable with respect to the product \( \sigma \)-algebra \( \mathcal{B}([0, b]) \otimes \mathcal{F}_b \), where \( \mathcal{B}([0, b]) \) represents the Borel \( \sigma \)-algebra on \([0, b]\). This ensures a sufficient level of regularity for the processes under consideration.

For any pair \( 0 \leq r \leq \delta \leq b \), we construct and impose the corresponding stochastic process spaces, tailored to specific intervals and aligned with the structure of the problem. These spaces provide a rigorous framework for analyzing the behavior and properties of stochastic processes across defined temporal domains.
\bigskip
\begin{align*}
    L^{2}_{F_{\delta}} (r,\delta; H) = \Big\{\psi  : [r, \delta] \times \Omega\to H \, :\, \psi \in \mathcal{B}([r, \delta]) \otimes \mathcal{F}_{\delta}-measurable\, and\, 
     \mathbb{E} \int_r^\delta |\psi (t)|_H^2 \, dt < \infty.\Big\}
\end{align*}
\begin{align*}
    L^{2}_{F} (r,\delta; H) =\Big\{ \psi \in  L^{2}_{F_{\delta}} (r,\delta; H) : \,\psi \, is\, a\, F-adapted. \Big\}
\end{align*}
\begin{align*}
     L^2_F(r,\delta; C([r,\delta]; H))=\Big\{\psi : [r, \delta] \times \Omega\to H \, :  \,\psi \, is\, a\, F-adapted, continuous\, and \, \mathbb{E}\Big( \max_{r\leq t\leq \delta} |\psi (t)|_H^2 \Big)<+\infty.\Big\}
\end{align*}
\begin{align*}
    L^2_F(r,\delta; L_0^2)=\Big\{ \psi  : [r, \delta] \times \Omega\to L_0^2 \, :  \, \psi (\cdot)\, is\, a \, F-adapted\, and\, \mathbb{E} \int_r^\delta \vert\psi (t)\vert_{L^0_2} \, dt < \infty. \Big\}
\end{align*}
      \begin{align*}
           L^2_F(r,\delta; L^2_F(\delta, b; L_0^2)) =\Big\{\zeta : [r, \delta] \times [\delta, b] \times \Omega\to L_0^2 \, :  \,\forall \tau \in [r, \delta] \,\zeta(\tau, \cdot) \in L^2_F(\delta, b; L_0^2)\, and\,  
     \mathbb{E} \int_r^\delta \int_\delta^b |\zeta(\tau,s)|_{L_0^2}^2 \, ds \, d\tau < \infty.\Big\}
      \end{align*}
      
     \begin{align*}
          L^2_F(r,\delta; L^2_F(r,\delta; L_0^2))=\Big\{   \zeta : [r, \delta]^2 \times \Omega\to L_0^2 \, :  \forall \tau \in [r, \delta], \,\zeta(\tau, \cdot) \in L^2_F(r, \delta ; L_0^2) \, and  \,
     \mathbb{E} \int_r^\delta \int_r^\delta \vert\zeta(\tau,s)\vert_{L_0^2}^2 \, ds \, d\tau < \infty.\Big\}
     \end{align*}
     
        \begin{align*}
        H^2[r, \delta] =\Big\{L^2_F(r, \delta; H) \times L^2_F(r, \delta; L^2_F(r, \delta; L_0^2))\Big\}
        \end{align*}

\bigskip

$\bullet$ \( L^2(\Delta^*; R^+) \) space consists of measurable functions \( f : \Delta^* \to \mathbb{R}^+ \), with the condition:
     \[
     \int_0^b \int_t^b |\rho(t,s)|^2 \, ds \, dt < \infty.
     \]
     This represents square-integrable functions over the interval \( [0, b] \) .
\bigskip

$\bullet$  \( L^2(\Delta; R^+) \) space consists of measurable functions \( f : \Delta \to \mathbb{R}^+ \), with the condition:
     \[
     \int_0^b \int_s^b |\rho(t,s)|^2 \, dt \, ds < \infty.
     \]
     This represents square-integrable functions with the roles of \( t \) and \( s \) reversed in the integration.
\bigskip

The set \( \mathscr{L}^2(\Delta^*; R^+) \) consists of functions \( f \in L^2(\Delta^*; R^+) \) that meet two conditions: First, for all \( t \in (0, b) \), the subsequent holds:

\[
\text{ess sup}_{t \in (0, b)} \left( \int_t^b |\rho(t, s)|^2 \, ds \right)^{1/2} < \infty.
\]

$\forall$ \( \varepsilon > 0 \), there is a finite partition \( \{ b_i \}_{i=0}^m \) of the interval \( (0, b) \), with \( 0 = b_0 < b_1 < \cdots < b_m = b \), such that for each \( i \in \{ 0, 1, \dots, m-1 \} \), the subsequent condition holds:

\[
\text{ess sup}_{t \in b_i, b_{i+1}} \left( \int_{b_i+1}^{t} | \rho(t, s)|^2 \, ds \right)^{1/2} < \varepsilon.
\]

The space \( \mathscr{L}^2( \Delta; R^+) \) can be defined in a similar way. To simplify, we omit the range space \( R^+ \) from the notation, resulting in:

\[
L^2(\Delta^*) \equiv L^2(\Delta^*; R^+), \quad L^2(\Delta) \equiv L^2(\Delta; R^+),
\]

and:

\[
\mathscr{L}^2(\Delta^*) \equiv \mathscr{L}^2(\Delta^*; R^+), \quad \mathscr{L}^2(\Delta) \equiv \mathscr{L}^2(\Delta; R^+).
\]
\section{Well-posedness of singular MF-BSVIEs}

In this paragraph, we examine the well-posedness of the MF-BSVIE \eqref{eq1}. To make it easier for readers to follow along, we will first restate the equation for clarity. By doing so, we aim to establish a clear and logical foundation for analyzing its properties and behavior. The discussion will focus on demonstrating the conditions under which the equation has a unique and well-defined solution, thereby reinforcing its theoretical validity and practical relevance.

	\begin{align}\label{r3}
X(t)&=\Psi(t)+\int_{t}^{b}\widehat{F}(t,s)ds-\int_{t}^{b}\aleph(t,s)dB_{s}\quad t\in [0,b].
	\end{align}

  where the free term \(\Psi(\cdot) \in L^{2}_{F_{b}}(0, b; H)\) and the generator \(\widehat{F}\) are specified. A pair \((X(\cdot), \aleph(\cdot, \cdot)) \in H^2[0, b]\)  is considered an adapted \(M\)-solution of the BSVIE \eqref{r3} if the equation \eqref{r3} is held in the standard Ito sense for almost every \(t \in [0, b]\), and the subsequent condition is satisfies:
 \begin{align*}
X(t)=\mathbb{E}X(s)+\int_{0}^{t}\aleph(t,s)dB_{s},\quad a.e.\quad t\in [0,b].
 \end{align*}
We present the subsequent set of assumptions.
\bigskip

$(A_{1})$ 
Let \(F : \Delta^* \times H \times L^2_0 \times L^2_0 \times\times H \times L^2_0 \times L^2_0 \times \mathbb{R} \to H\) be a measurable function such that for all \((t,x_{1},z_{1},\xi_{1},x_{2},z_{2},\xi_{2}) \in [0, b] \times H \times L^2_0 \times L^2_0\times H \times L^2_0 \times L^2_0\), the mapping \(s \mapsto P(t,s,x_{1},z_{1},\xi_{1},x_{2},z_{2},\xi_{2})\) is \(F\)-progressively measurable, and \(P(t,s, 0, 0, 0,0,0,0) = 0\). Additionally, it satisfies that

\begin{align}
&\quad \vert P(t,s,x_{1},z_{1},\xi_{1},x_{2},z_{2},\xi_{2})-P(t,s,\bar{x}_{1},\bar{z}_{1},\bar{\xi}_{1},\bar{x}_{2},\bar{z}_{2},\bar{\xi}_{2})\vert\nonumber\\
&\leq L_{x_{1}}(t,s)\vert x_{1}-\bar{x}_{1}\vert_{H}+ L_{z_{1}}(t,s)\vert z_{1}-\bar{z}_{1}\vert_{L^{0}_{2}}+L_{\xi_{1}}(t,s)\vert \xi_{1}-\bar{\xi}_{1}\vert_{L^{0}_{2}}\\
&+ L_{x_{2}}(t,s)\vert x_{2}-\bar{y}_{2}\vert_{H}+ L_{z_{2}}(t,s)\vert z_{2}-\bar{z}_{2}\vert_{L^{0}_{2}}+L_{\xi_{2}}(t,s)\vert \xi_{2}-\bar{\xi}_{2}\vert_{L^{0}_{2}}.\nonumber
\end{align}

where \(L_{x_{1}},L_{x_{2}} \in L^2(\Delta^*)\),\quad  \(L_{\xi_1},L_{\xi_2} \in  \mathscr{L}^{2}(\Delta^{*})\), and \quad
\begin{align*}
\sup_{t \in (0,b)} \int_t^{b} L_{z_1}(t,s)^2 ds < \infty,\quad \sup_{t \in (0,b)} \int_t^{b} L_{z_2}(t,s)^2 ds < \infty.
\end{align*}

\begin{remark}
    In both Anh et al. \cite{Anh} and Yong \cite{Yong}, the authors established Lipschitz conditions analogous to those presented in $(A_{1})$ and posited that all the coefficients simultaneously satisfy the subsequent condition:

\begin{align}\label{w1}
\sup_{t \in [0,b]} \int_{t}^{b}  \rho(t,s)^{2 + \varepsilon} ds < \infty,
\end{align}

where \(f\) represents the functions \(L_{x_{1}},L_{x_{1}}\), \(L_{z_1},L_{z_2}\), and \(L_{\xi_{1}},L_{\xi_2}\) for some \(\varepsilon > 0\).
\end{remark}

\begin{remark}
    The scenario where \(P(t,s,0,0,0,0,0,0) \not= 0\) can be addressed as follows. We identify a new free term
    
    \begin{align*}
        \tilde{\Psi}(t)\overset{\triangle}{=} \Psi(t) + \int_{t}^{b} P(t,s,0,0,0,0,0,0) ds
    \end{align*}
    
    for \(t \in [0,b]\). We can demonstrate that \(\tilde{\Psi}(\cdot) \in L^2_{F_{b}}(0,b;H)\) if the subsequent condition holds:

\[
\mathbb{E} \left[ \int_{0}^{b} \left( \int_{t}^{b} |P(t,s,0,0,0,0,0,0)|_H ds \right)^2 dt \right] < \infty.
\]
\end{remark}

We introduce some lemmas that will be beneficial later. For every \( r, \delta \in [0, b) \), focus on the subsequent \( H \)-valued mean-field stochastic integral equation:

\begin{align}\label{f3.4}
\lambda(t, \tau) &= \Psi(t) + \int_{\tau}^{b} \eta(t, s, \mu(t, s), \mathbb{E}[\mu(t, s)]) \, ds- \int_{\tau}^{b} \mu(t, s) \, dB_{s}, \quad t \in [\delta, b], \; \tau \in [r, b],
\end{align}

where the function \( \eta : [\delta, b] \times [r, b] \times L^2_0 \times L^2_0 \to H \) is given, with \( \eta(t, s, \mu, \mathbb{E}[\mu]) \) representing the generator term that depends on both the process \( \mu(t, s) \) and its mean field component \( \mathbb{E}[\mu(t, s)] \).
The unknown process is \( (\lambda(\cdot, \cdot), \mu(\cdot, \cdot)) \), where \( (\lambda(t, \cdot), \mu(t, \cdot)) \) is \( \mathcal{F} \)-adapted  \( \forall\, t \in [\delta, b] \).
\bigskip

This equation can be regarded as:
\bigskip

$\bullet$ A family of infinite-dimensional MF-BSDEs   on \( [r, b] \), parameterized by \( t \in [\delta, b] \).
\bigskip

$\bullet$ A family of infinite-dimensional mean-field stochastic Fredholm-type integral equations  on \( [\delta, b] \), parameterized by \( \tau \in [r, b] \).
\bigskip

We impose the subsequent hypotheses on the generator \( h \):
\bigskip

$(A_{2})$ Let \( r, \delta \in [0, b) \) and \( \eta : [\delta, b] \times [r, b] \times L^2_0 \times L^2_0 \to H \) be a measurable function, where \( s \mapsto \eta(t, s, \mu, \mathbb{E}[\mu]) \) is \( \mathcal{F} \)-progressively measurable for each \( (t, \mu) \in [\delta, b] \times L^2_0 \). 
\bigskip

Furthermore, we assume:

\[
\int_{\delta}^{b} \mathbb{E} \left[ \int_{\tau}^{b} |\eta(t, s, 0, 0)|_H \, ds \right]^2 dt < \infty.
\]
\bigskip

 We impose the subsequent additional conditions on the generator \( h \):

For any \( (t, s) \in [\delta, b] \times [r, b] \) and \( z, \bar{z} \in L^2_0 \), the function \( h \) satisfies the mean field Lipschitz condition almost surely:

\[
|\eta(t, s, z, \mathbb{E}[z]) - \eta(t, s, \bar{z}, \mathbb{E}[\bar{z}])|_H \leq L(t, s) \left( |z - \bar{z}|_{L^2_0} + |\mathbb{E}[z] - \mathbb{E}[\bar{z}]|_{L^2_0} \right),
\]

where \( L : [\delta, b] \times [r, b] \to [0, \infty) \) is a deterministic function such that

\[
\sup_{t \in [\delta, b]} \int_{\tau}^{b} L(t, s)^2 \, ds < \infty.
\]

This condition ensures that \( \eta \) is Lipschitz continuous in both \( z \) and \( \mathbb{E}[z] \), with the continuity bounds controlled by \( L(t, s) \), ensuring integrability over \( [r, b] \).

\begin{lemma}\label{l1}

Assume that condition $(A_{2})$ holds. Then, for any \( \Psi(\cdot) \in L^2_{\mathcal{F}_T}(\delta, b; H) \), equation \eqref{f3.4} has a unique adapted solution 

\[
(\lambda(t, \cdot), \mu(t, \cdot)) \in L^2_{\mathcal{F}}(; C([r, b]; H)) \times L^2_{\mathcal{F}}(r, b; L^2_0)
\]

for almost every \( t \in [\delta, b] \). Moreover, if \( \bar{h} \) also satisfies $(A_{2})$, \( \bar{\Psi}(\cdot) \in L^2_{\mathcal{F}_b}(\delta, b; H) \), and \begin{align*} 
(\bar{\lambda}(t, \cdot), \bar{\mu}(t, \cdot)) \in L^2_{\mathcal{F}}(; C([r, b]; H)) \times L^2_{\mathcal{F}}(r, b; L^2_0) \end{align*} 

is the unique adapted solution to \eqref{f3.4} when \( (h, \Psi) \) is replaced by \( (\bar{h}, \bar{\Psi}) \). Then we have the estimate:

\begin{align}\label{3.5}
&\quad \mathbb{E} \bigg\{ \sup_{\tau \in [r, b]} |\lambda(t, \tau) - \bar{\lambda}(t, \tau)|_H^2 + \int_{\tau}^{b} |\mu(t, s) - \bar{\mu}(t, s)|_{L^2_0}^2 \, ds \bigg\} \nonumber\\
&\leq C \, \mathbb{E} \bigg\{ |\Psi(t) - \bar{\Psi}(t)|_H^2 + \left( \int_{\tau}^{b} \vert \eta(t, s, \mu(t, s), \mathbb{E}[\mu(t, s)]) - \bar{\eta}(t, s, \bar{\mu}(t, s), \mathbb{E}[\bar{\mu}(t, s)])\vert_{H} \, ds \right)^2 \bigg\},
\end{align}

for almost every \( t \in [\delta, b] \).
\end{lemma}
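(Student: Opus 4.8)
The plan is to recognize equation \eqref{f3.4} as a parametrized family (indexed by $t \in [\delta, b]$) of classical mean-field BSDEs on the time interval $[r,b]$ with terminal-type data $\Psi(t)$ that is $\mathcal{F}_b$-measurable, and to reduce the statement to the known well-posedness theory for (mean-field) BSDEs with square-integrable data, combined with a measurability-in-the-parameter argument. First I would fix $t \in [\delta,b]$ and view $\mu(t,\cdot)$ as the unknown control process and $\lambda(t,\cdot)$ as the unknown value process; since the generator $\eta(t,s,z,\mathbb{E}[z])$ depends on the unknown only through $z$ and its mean, not through $\lambda$, the equation is actually a mean-field BSDE whose driver is Lipschitz in $z$ with Lipschitz ``coefficient'' $L(t,s)$ satisfying $\sup_{t}\int_\tau^b L(t,s)^2\,ds<\infty$, and whose data $\Psi(t)$ together with $\eta(t,\cdot,0,0)$ satisfies the integrability hypothesis from $(A_2)$. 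A Picard iteration (or Banach fixed point) in the space $L^2_{\mathcal{F}}(r,b;C([r,b];H)) \times L^2_{\mathcal{F}}(r,b;L^2_0)$, with an equivalent weighted norm $\mathbb{E}\int_\tau^b e^{\beta\int_\tau^s L(t,u)^2 du}(\cdots)\,ds$ chosen so that the contraction constant is strictly less than one, yields existence and uniqueness for a.e.\ $t$; the mean-field term contributes only a bounded perturbation via $|\mathbb{E}[z]-\mathbb{E}[\bar z]|_{L^2_0}^2 \le \mathbb{E}|z-\bar z|_{L^2_0}^2$, so it does not destroy the contraction.

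For the stability estimate \eqref{3.5}, I would take the difference of the two equations satisfied by $(\lambda,\mu)$ and $(\bar\lambda,\bar\mu)$, apply Itô's formula to $|\lambda(t,\tau)-\bar\lambda(t,\tau)|_H^2$ on $[\tau,b]$, take expectations to kill the martingale part, and obtain
\[
\mathbb{E}|\lambda(t,\tau)-\bar\lambda(t,\tau)|_H^2 + \mathbb{E}\int_\tau^b |\mu(t,s)-\bar\mu(t,s)|_{L^2_0}^2\,ds = \mathbb{E}|\Psi(t)-\bar\Psi(t)|_H^2 + 2\mathbb{E}\int_\tau^b \langle \lambda-\bar\lambda,\, \eta - \bar\eta\rangle_H\,ds,
\]
where $\eta-\bar\eta$ is evaluated at the respective solutions. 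Estimating the cross term by $2|\langle\lambda-\bar\lambda,\eta-\bar\eta\rangle_H| \le \varepsilon\sup_\tau|\lambda-\bar\lambda|_H^2 \cdot (\text{something integrable}) + \varepsilon^{-1}(\int_\tau^b|\eta-\bar\eta|_H\,ds)^2$, moving the $\mu$-difference term and the cross term appropriately, and then invoking the Burkholder--Davis--Gundy inequality to upgrade the pointwise-in-$\tau$ bound to the $\sup_{\tau\in[r,b]}$ bound, gives \eqref{3.5} with a constant $C$ depending only on $b$ and $\sup_t\int_\tau^b L(t,s)^2\,ds$. A standard Grönwall step handles the remaining $\mathbb{E}|\lambda-\bar\lambda|_H^2$ terms that appear after the BDG estimate.

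The main obstacle I anticipate is not the fixed-point argument for each fixed $t$ — that is routine — but rather establishing the joint measurability of the solution map $t \mapsto (\lambda(t,\cdot),\mu(t,\cdot))$ so that the ``for almost every $t$'' statements are meaningful and the solution lives in the iterated spaces $L^2_F(\delta,b; L^2_F(r,b;L^2_0))$ of the paper's framework. The clean way to do this is to run the Picard iteration simultaneously for all $t$: start with $\mu^0(t,\cdot)\equiv 0$, which is jointly measurable, and check inductively that each iterate $(\lambda^{n}(t,\cdot),\mu^{n}(t,\cdot))$ depends measurably on $t$ (the conditional expectations and stochastic integrals preserve joint measurability in the parameter), then pass to the limit in the complete space $L^2_{\mathcal{F}_b}(\delta,b; C([r,b];H)) \times L^2_{\mathcal{F}_b}(\delta,b;L^2_F(r,b;L^2_0))$ using the uniform (in $t$) contraction constant. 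One must also be slightly careful that the hypothesis $(A_2)$ only controls $\int_\delta^b \mathbb{E}[\int_\tau^b|\eta(t,s,0,0)|_H ds]^2 dt<\infty$ in an integrated sense, so finiteness of $\mathbb{E}[\int_\tau^b|\eta(t,s,0,0)|_H ds]^2$ holds only for a.e.\ $t$; this is exactly why the conclusion is stated for almost every $t\in[\delta,b]$, and it is consistent with the fixed-point construction.
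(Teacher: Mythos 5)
Your proposal is correct, but it cannot be compared line-by-line with the paper, because the paper gives no proof of Lemma \ref{l1}: it simply defers to Proposition 2.1 and Lemma 3.3 of Yong (2008), asserting the argument carries over. What you have written is essentially a worked-out version of that cited route, extended to the mean-field term, and it holds together. Two remarks that would streamline it. First, since the generator $\eta$ does not depend on $\lambda$, the map you iterate is explicit: freezing the input $z$, the pair $(\lambda,\mu)$ is obtained from the conditional expectation of $\Psi(t)+\int_\tau^b\eta(t,s,z(s),\mathbb{E}[z(s)])\,ds$ together with the martingale representation theorem, so the Picard iteration acts on $\mu$ alone; moreover your weighted-norm contraction does work here precisely because the weight $e^{\beta\int_r^s L(t,u)^2du}$ is deterministic and, by $\sup_{t}\int_r^b L(t,s)^2ds<\infty$, uniformly bounded above and below, so the weighted norm is equivalent to the usual one and the contraction constant (of order $1/\beta$, with a factor $2$ from the mean-field term absorbed via Jensen) is uniform in $t$ — a point worth making explicit, since it is also what lets you pass to the limit in the iterated space and settle the joint measurability in $t$ that you rightly flag (the alternative is a partition of $[r,b]$ into pieces where $\int L^2$ is small, which is closer to Yong's original argument and to how the paper later proves its Theorem 3.1). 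Second, the stability estimate \eqref{3.5} is easier than your sketch suggests and needs neither the Lipschitz condition nor Grönwall: the difference $(\lambda-\bar\lambda,\mu-\bar\mu)$ satisfies a BSDE whose drift $\eta(t,s,\mu,\mathbb{E}[\mu])-\bar\eta(t,s,\bar\mu,\mathbb{E}[\bar\mu])$ is exactly the known process appearing on the right-hand side of \eqref{3.5}, so the bound is the basic a priori estimate (conditional expectation or Itô's formula, Doob/BDG for the supremum in $\tau$); your Itô-plus-BDG computation gives this, and the Grönwall step is superfluous. With these simplifications your argument matches the intended proof and correctly accounts for the ``almost every $t$'' restriction coming from the integrated smallness hypothesis on $\eta(t,\cdot,0,0)$ in $(A_2)$.
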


\begin{proof}
Given the structural similarity of this lemma's proof to the arguments employed in Proposition 2.1 and Lemma 3.3 of \cite{Yong}, we omit the detailed derivation here for brevity. The reasoning follows analogous mathematical techniques and can be reconstructed by direct reference to the aforementioned results.
Given the structural similarity of this lemma's proof to the arguments employed in Proposition 2.1 and Lemma 3.3 of \cite{Yong}, we omit the detailed derivation here for brevity. The reasoning follows analogous mathematical techniques and can be reconstructed by direct reference to the aforementioned results.
\end{proof}

This result shows that the difference in solutions \( (\lambda, \mu) \) and \( (\bar{\lambda}, \bar{\mu}) \) depends linearly on the differences between \( \Psi \) and \( \bar{\Psi} \) and between \( \eta \) and \( \bar{\eta} \), with the mean field dependence included in \( \eta \) and \( \bar{\eta} \).

\bigskip

Next, we examine two particular cases of the previous result. First, consider fixing $ \tau=\delta \in [r, b)$. Define

\[
\Psi_{\delta}(t) := \lambda(t, \delta), \quad \aleph(t, s) := \mu(t, s), \quad t \in [r, \delta], \; s \in [\delta, b].
\]

Then equation \eqref{f3.4} becomes

\begin{align}\label{3.6}
\Psi_{\delta}(t) = \Psi(t) + \int_{\delta}^{b} \eta(t, s, \aleph(t, s), \mathbb{E}[\aleph(t, s)]) \, ds - \int_{\delta}^{b} \aleph(t, s) \, dB_{s}, \quad t \in [r, \delta].
\end{align}

This equation represents a Hilbert-space-valued stochastic Fredholm-type integral equation. A pair 

\begin{align*}
(\Psi_{\delta}(\cdot), \aleph(\cdot, \cdot)) \in L^{2}_{\mathcal{F}_{\delta}}(r, \delta; H) \times L^{2}(r, \delta; L^{2}_{\mathcal{F}}(\delta, b; L^{2}_{0}))
\end{align*}

that satisfies \eqref{3.6} in the standard Ito sense is referred to as an adapted solution of \eqref{3.6}. Here, note that \( \Psi_{\delta}(t) \) only needs to be \( \mathcal{F}_{\delta} \)-measurable for almost all \( t \in [r, \delta] \), rather than \( \mathcal{F} \)-adapted. Based on Lemma \ref{l1}, we obtain the subsequent result for this mean-field setting.
\begin{corollary}\label{c1}
Let condition $(A_{2})$ satisfy. Then, for every \( \Psi(\cdot) \in L^2_{\mathcal{F}_{b}}(r, \delta; H) \), the stochastic Fredholm-type integral equation \eqref{3.6} admits a unique adapted solution

\[
(\Psi_{\delta}(\cdot), \aleph(\cdot, \cdot)) \in L^2_{\mathcal{F}_{\delta}}(r, \delta; H) \times L^2(r, \delta; L^2_{\mathcal{F}}(\delta, b; L_0^2)).
\]

The second special case of \eqref{f3.4} is as follows: Let \( r = \delta \), and define

\[
X(t) := \lambda(t, t), \quad t \in [\delta, b], \quad \aleph(t, s) := \mu(t, s), \quad (t, s) \in [\delta, b]^2.
\]

Then equation \eqref{f3.4} becomes

\begin{align}\label{3.7}
X(t) = \Psi(t) + \int_t^b \eta(t, s, \aleph(t, s), \mathbb{E}[\aleph(t, s)]) \, ds - \int_t^b \aleph(t, s) \, dB_{s}, \quad t \in [\delta, b].
\end{align}

This is a specific case of a singular MF-BSVIE \eqref{r3}, where the generator \( \eta \) is independent of \( X(s) \) and \( \aleph(s, t) \). We can define \( \aleph(t, s) \) for \( (t, s) \in [\delta, b]^2 \) by using the martingale representation theorem:

\[
X(t) = \mathbb{E}[X(t) | \mathcal{F}_{\delta}] + \int_{\delta}^t \aleph(t, s) \, dB_{s}, \quad t \in [\delta, b].
\]
\end{corollary}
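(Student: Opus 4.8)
The plan is to read the corollary off Lemma~\ref{l1} by freezing the backward time parameter; there is essentially no new analysis, only a matching of measurability and function spaces. First I would apply Lemma~\ref{l1} with its parameter pair chosen so that $t$ ranges over $[r,b]$ and the backward variable $\tau$ over $[\delta,b]$ (legitimate since $r,\delta\in[0,b)$), and then restrict attention to $t\in[r,\delta]$. Under $(A_2)$ this furnishes, for almost every $t\in[r,\delta]$, a unique adapted pair $(\lambda(t,\cdot),\mu(t,\cdot))\in L^2_{\mathcal F}(\delta,b;C([\delta,b];H))\times L^2_{\mathcal F}(\delta,b;L^2_0)$ solving \eqref{f3.4}, as well as the standard a priori bound $\mathbb{E}\big\{\sup_{\tau\in[\delta,b]}|\lambda(t,\tau)|_H^2+\int_\delta^b|\mu(t,s)|_{L^2_0}^2\,ds\big\}<\infty$, which follows, exactly as for ordinary Lipschitz BSDEs, from the hypotheses collected in $(A_2)$ and the integrability requirement on $\eta(\cdot,\cdot,0,0)$.

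Next I would freeze $\tau=\delta$ in \eqref{f3.4} and set, for $t\in[r,\delta]$ and $s\in[\delta,b]$, $\Psi_\delta(t):=\lambda(t,\delta)$ and $\aleph(t,s):=\mu(t,s)$; evaluating \eqref{f3.4} at $\tau=\delta$ is then exactly \eqref{3.6}, so it remains only to check the claimed regularity. The crucial observation is that, for a.e.\ $t$, $\lambda(t,\cdot)$ is the first component of an adapted solution of a mean-field BSDE on $[\delta,b]$ with $\mathcal F_b$-measurable terminal datum $\Psi(t)$, whence its value $\lambda(t,\delta)$ at the initial time is $\mathcal F_\delta$-measurable, joint $(t,\omega)$-measurability being inherited from the construction. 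Integrating the a priori bound over $t\in[r,\delta]$ yields $\mathbb{E}\int_r^\delta|\Psi_\delta(t)|_H^2\,dt<\infty$ and $\mathbb{E}\int_r^\delta\int_\delta^b|\aleph(t,s)|_{L^2_0}^2\,ds\,dt<\infty$, placing $(\Psi_\delta,\aleph)$ in $L^2_{\mathcal F_\delta}(r,\delta;H)\times L^2(r,\delta;L^2_{\mathcal F}(\delta,b;L^2_0))$. For uniqueness, any adapted solution $(\widetilde\Psi_\delta,\widetilde\aleph)$ of \eqref{3.6} can, for each fixed $t$, be completed to an adapted solution of \eqref{f3.4} on $[\delta,b]$ by setting $\widetilde\lambda(t,\tau):=\Psi(t)+\int_\tau^b\eta(t,s,\widetilde\aleph(t,s),\mathbb{E}[\widetilde\aleph(t,s)])\,ds-\int_\tau^b\widetilde\aleph(t,s)\,dB_s$; the uniqueness half of Lemma~\ref{l1} then forces $(\widetilde\lambda(t,\cdot),\widetilde\aleph(t,\cdot))=(\lambda(t,\cdot),\mu(t,\cdot))$, hence $(\widetilde\Psi_\delta,\widetilde\aleph)=(\Psi_\delta,\aleph)$.

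For the second special case I would instead take $r=\delta$ in Lemma~\ref{l1} and evaluate the solution along the diagonal $\tau=t$: with $X(t):=\lambda(t,t)$ and $\aleph(t,s):=\mu(t,s)$ for $\delta\le t\le s\le b$, equation \eqref{f3.4} becomes \eqref{3.7}, and since $\lambda(t,\cdot)$ is $\mathcal F_\tau$-adapted the random variable $X(t)$ is $\mathcal F_t$-measurable with $\mathbb{E}\int_\delta^b|X(t)|_H^2\,dt<\infty$. It then remains to extend $\aleph$ to the lower triangle $\{(t,s):\delta\le s<t\le b\}$, which I would do through the martingale representation $X(t)=\mathbb{E}[X(t)\mid\mathcal F_\delta]+\int_\delta^t\aleph(t,s)\,dB_s$; the It\^o isometry gives $\mathbb{E}\int_\delta^t|\aleph(t,s)|_{L^2_0}^2\,ds\le\mathbb{E}|X(t)|_H^2$, so after integrating in $t$ the completed $\aleph$ lies in $L^2(\delta,b;L^2_{\mathcal F}(\delta,b;L^2_0))$ and $(X(\cdot),\aleph(\cdot,\cdot))$ is an adapted $M$-solution of the singular MF-BSVIE \eqref{3.7}.

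The step I expect to demand the most care is not any estimate but the bookkeeping reconciling the domains and function spaces of Lemma~\ref{l1} with those asserted here: one must confirm that freezing $\tau=\delta$ (respectively setting $\tau=t$) yields a pair with exactly the measurability and adaptedness listed, and that an adapted solution of \eqref{3.6} in the stated sense does correspond to an adapted solution of the associated BSDE on $[\delta,b]$, so that the uniqueness part of Lemma~\ref{l1} may be invoked. The mean-field terms require no separate treatment, since in \eqref{f3.4} they are already absorbed into the single generator $\eta$.
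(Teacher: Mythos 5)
Your proposal is correct and follows essentially the same route as the paper, which states this corollary without a separate proof as a direct specialization of Lemma~\ref{l1}: freeze $\tau=\delta$ and set $\Psi_\delta(t)=\lambda(t,\delta)$, $\aleph(t,s)=\mu(t,s)$ (respectively take $r=\delta$ and evaluate on the diagonal, extending $\aleph$ by martingale representation). Your added details — the $\mathcal F_\delta$-measurability of $\lambda(t,\delta)$, integrating the a priori bound in $t$, and completing a solution of \eqref{3.6} to a solution of \eqref{f3.4} (adapted because $\widetilde\Psi_\delta(t)$ is $\mathcal F_\delta$-measurable) to invoke uniqueness — are exactly the routine checks the paper leaves implicit.
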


This formulation provides a mean-field approach to the problem, where the solution process \( X(t) \) is adapted to the filtration \( \mathcal{F}_{\delta} \), and the evolution of \( \aleph(t, s) \) depends on the interactions in the system, modeled through the mean field \( \mathbb{E}[\aleph(t, s)] \).
\begin{corollary}\label{c2}
Let condition $(A_{2})$ hold. Then, for any \( \Psi(\cdot) \in L^2_{\mathcal{F}_b}(\delta, b; H) \), singular MF-BSVIE \eqref{3.7} admits a unique adapted M-solution

\[
(X(\cdot), \aleph(\cdot, \cdot)) \in L^2_{\mathcal{F}}(\delta, b; H) \times L^2(\delta, b; L^2_{\mathcal{F}}(\delta, b; L_0^2)).
\]

Moreover, if \( \bar{\eta} \) also satisfies $(A_{2})$, \( \bar{\Psi}(\cdot) \in L^2_{\mathcal{F}_T}(\delta, b; H) \), and \( (X(\cdot), \aleph(\cdot, \cdot)) \) is the unique adapted M-solution of \eqref{3.7} with \( (\eta, \Psi) \) replaced by \( (\bar{\eta}, \bar{\Psi}) \), then

\begin{align}\label{3.8}
&\quad \mathbb{E} \left[ \sup_{t \in [\delta, b]} \vert X(t) - \bar{X}(t)\vert_H^2 + \int_{\delta}^b \vert \aleph(t, s) - \bar{\aleph}(t, s)\vert_{L_0^2}^2 \, ds \right] \nonumber\\
&\leq C \mathbb{E} \left[ \vert \Psi(t) - \bar{\Psi}(t) \vert_H^2 + \int_t^b \vert \eta(t, s, \aleph(t, s),\mathbb{E}\aleph(t, s)) - \bar{\eta}(t, s, \aleph(t, s),\mathbb{E}\aleph(t, s)) \vert_H^2 \, ds \right],
\end{align}
for \( t \in [\delta, b] \), where \( C \) is a constant depending on the problem's parameters.
\end{corollary}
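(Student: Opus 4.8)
The plan is to read \eqref{3.7} as the diagonal specialization $r=\delta$, $\tau=t$ of Lemma~\ref{l1} and to recover the remaining part of the kernel from the martingale representation theorem, exactly as in the non--mean-field M-solution theory. What makes this routine is that the generator $\eta$ in \eqref{3.7} is \emph{given data}: it involves $\aleph(t,s)$ and $\mathbb{E}[\aleph(t,s)]$ but not the unknown $X(s)$, so there is no fixed point to run here, and the mean-field coupling is already absorbed inside Lemma~\ref{l1}. (This corollary is the elementary building block on which the full singular MF-BSVIE \eqref{r3} will afterwards be treated by a contraction.)

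\emph{Existence.} Apply Lemma~\ref{l1} with $r=\delta$: for a.e.\ $t\in[\delta,b]$ there is a unique adapted pair $(\lambda(t,\cdot),\mu(t,\cdot))\in L^2_{\mathcal F}(\delta,b;C([\delta,b];H))\times L^2_{\mathcal F}(\delta,b;L^2_0)$ solving \eqref{f3.4}. Since, by the stability bound \eqref{3.5}, the solution map $\big(\Psi(t),\eta(t,\cdot,\cdot,\cdot)\big)\mapsto(\lambda(t,\cdot),\mu(t,\cdot))$ is Lipschitz, one may choose a version jointly measurable in $(t,s,\omega)$ (filling in the exceptional null set of $t$ by $0$). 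Set $X(t):=\lambda(t,t)$, well defined by continuity of $\lambda(t,\cdot)$, and $\aleph(t,s):=\mu(t,s)$ for $\delta\le t\le s\le b$. Evaluating \eqref{f3.4} at $\tau=t$ yields \eqref{3.7}; adaptedness of $\lambda(t,\cdot)$ makes $X(t)$ $\mathcal F_t$-measurable; and Lemma~\ref{l1} together with the integrability hypothesis in $(A_2)$ gives $X\in L^2_{\mathcal F}(\delta,b;H)$ and $\mathbb E\int_\delta^b\int_t^b|\aleph(t,s)|^2_{L^2_0}\,ds\,dt<\infty$. To define $\aleph(t,\cdot)$ on $[\delta,t]$, apply the martingale representation theorem to the $\mathcal F_t$-measurable random variable $X(t)$: the $H$-valued martingale $s\mapsto\mathbb E[X(t)\mid\mathcal F_s]$ on $[\delta,t]$ equals $\mathbb E[X(t)\mid\mathcal F_\delta]+\int_\delta^s\aleph(t,u)\,dB_u$, which at $s=t$ is exactly the M-solution identity and, by the It\^o isometry, satisfies $\mathbb E\int_\delta^t|\aleph(t,s)|^2_{L^2_0}\,ds\le\mathbb E|X(t)|^2_H$. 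A measurable-selection/stochastic-Fubini argument makes $\aleph$ jointly measurable and $s$-adapted, so $\aleph\in L^2(\delta,b;L^2_{\mathcal F}(\delta,b;L^2_0))$.

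\emph{Uniqueness.} If $(X,\aleph)$ and $(\bar X,\bar\aleph)$ are two adapted M-solutions, then for each fixed $t$ the function $\lambda(t,\tau):=X(t)-\int_t^\tau\eta(t,s,\aleph(t,s),\mathbb E[\aleph(t,s)])\,ds+\int_t^\tau\aleph(t,s)\,dB_s$, $\tau\in[t,b]$, is $\mathcal F_\tau$-adapted (as $X(t)$ is $\mathcal F_t$-measurable and the two integrals over $[t,\tau]$ are $\mathcal F_\tau$-measurable) and, by \eqref{3.7}, equals $\Psi(t)+\int_\tau^b\eta\,ds-\int_\tau^b\aleph\,dB_s$; hence $(\lambda(t,\cdot),\aleph(t,\cdot))$ on $[t,b]$ is the unique adapted solution of a mean-field BSDE furnished by Lemma~\ref{l1}. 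This forces $X=\bar X$ a.e.\ and $\aleph=\bar\aleph$ on $\{\delta\le t\le s\le b\}$; subtracting the two M-solution identities then gives $\int_\delta^t(\aleph-\bar\aleph)(t,s)\,dB_s=0$, so $\aleph=\bar\aleph$ on $\{\delta\le s<t\le b\}$ by the It\^o isometry.

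\emph{Stability and main obstacle.} I would obtain \eqref{3.8} from \eqref{3.5} evaluated at $\tau=t$, using $\sup_{\tau\in[\delta,b]}|\lambda(t,\tau)-\bar\lambda(t,\tau)|^2_H\ge|X(t)-\bar X(t)|^2_H$, the identity $\int_t^b|\mu-\bar\mu|^2_{L^2_0}\,ds=\int_t^b|\aleph-\bar\aleph|^2_{L^2_0}\,ds$, a Cauchy--Schwarz bound $\big(\int_t^b|\eta-\bar\eta|_H\,ds\big)^2\le(b-\delta)\int_t^b|\eta-\bar\eta|^2_H\,ds$, the Lipschitz condition on $\bar\eta$ to pass from $\bar\eta(t,s,\bar\aleph,\cdot)$ to $\bar\eta(t,s,\aleph,\cdot)$ (the leftover term $L(t,s)\big(|\aleph-\bar\aleph|_{L^2_0}+|\mathbb E\aleph-\mathbb E\bar\aleph|_{L^2_0}\big)$ being absorbed into the left side by first proving the estimate on subintervals of $[\delta,b]$ on which $\sup_t\int L(t,s)^2\,ds$ is small and then concatenating), and finally the It\^o isometry to dominate $\mathbb E\int_\delta^t|\aleph-\bar\aleph|^2_{L^2_0}\,ds$ by $\mathbb E|X(t)-\bar X(t)|^2_H$. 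The only genuinely delicate point in the whole program is the joint measurability in $(t,s,\omega)$ and the $s$-adaptedness of the assembled kernel $\aleph$ and of the diagonal process $X(t)=\lambda(t,t)$ — Lemma~\ref{l1} furnishes solutions only for a.e.\ $t$ — which is settled by the Lipschitz continuity of the solution map in \eqref{3.5} together with a stochastic-Fubini argument; the mean-field terms bring nothing new here, because $\mathbb E[\aleph(t,s)]$ is pinned down as soon as $\aleph$ is, and Lemma~\ref{l1} already incorporates that dependence.
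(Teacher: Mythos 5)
Your proposal is correct and follows essentially the same route as the paper, which presents Corollary \ref{c2} without a separate proof as the $r=\delta$ diagonal specialization of Lemma \ref{l1} (setting $X(t)=\lambda(t,t)$, $\aleph(t,s)=\mu(t,s)$ for $t\le s$, extending $\aleph$ to $s<t$ by the martingale representation identity, and reading the stability bound \eqref{3.8} off \eqref{3.5}). The extra details you supply — joint measurability of the assembled kernel, the uniqueness argument via re-embedding an M-solution into \eqref{f3.4}, and the add-and-subtract/absorption step needed to replace $\bar{\eta}(t,s,\bar{\aleph},\mathbb{E}\bar{\aleph})$ by $\bar{\eta}(t,s,\aleph,\mathbb{E}\aleph)$ in the right-hand side — are exactly the points the paper leaves implicit, and they are handled correctly.
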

\bigskip

\begin{theorem}
Assume condition \((A_{1})\) holds. Then for any \(\Psi(\cdot) \in L^2_{\mathcal{F}_T}(0, b; H)\), equation \eqref{r3} has a unique adapted \(M\)-solution within \(H^2[0, b]\). Additionally, the subsequent bound is satisfied:

\[
\mathbb{E} \left\{ \int_0^b |X(t)|_H^2 \, dt + \int_0^b \int_0^b |\aleph(t,s)|_{L^2_0}^2 \, ds \, dt \right\} \leq C \mathbb{E} \int_0^b |\Psi(t)|_H^2 \, dt.
\]

Assume $\bar{F}$ also meets condition $(A_{1})$, and let $\bar{\Psi}(\cdot) \in L^2_{\mathcal{F}_{b}}(0, b; H)$. If \((\bar{Y}(\cdot), \bar{\aleph}(\cdot, \cdot)) \in H^2[0, b]\) represents the adapted \(M\)-solution of \eqref{r3} with \(F\) and \(\Psi(\cdot)\) replaced by \(\bar{F}\) and \(\bar{\Psi}(\cdot)\), respectively, then the subsequent stability estimate is valid:

\begin{align*}
&\quad\mathbb{E} \left\{ \int_0^b |X(t) - \bar{X}(t)|_H^2 \, dt + \int_0^b \int_0^b |\aleph(t,s) - \bar{\aleph}(t,s)|_{L^2_0}^2 \, ds \, dt \right\} \\
&\leq C \bigg\{ \mathbb{E} \int_0^b |\Psi(t) - \bar{\Psi}(t)|_H^2 \, dt \\
& + \mathbb{E} \int_0^b \int_t^b |P(t,s, \bar{X}(s), \bar{\aleph}(t,s), \bar{\aleph}(s,t), \mathbb{E}\bar{X}(s), \mathbb{E}\bar{\aleph}(t,s), \mathbb{E}\bar{\aleph}(s,t)) \\
& - \bar{F}(t,s, \bar{X}(s), \bar{\aleph}(t,s), \bar{\aleph}(s,t), \mathbb{E}\bar{X}(s), \mathbb{E}\bar{\aleph}(t,s), \mathbb{E}\bar{\aleph}(s,t))|_H^2 \, ds \, dt \bigg\}.
\end{align*}
\end{theorem}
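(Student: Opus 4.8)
The plan is to prove existence and uniqueness by a contraction-mapping argument on a suitably weighted norm on the M-solution space $H^2[0,b]$, exploiting the two corollaries derived from Lemma~\ref{l1}. First I would reduce to the case $P(t,s,0,0,0,0,0,0)=0$ (already covered by assumption $(A_1)$, and otherwise handled by Remark~1.2). The key structural observation is the standard ``backward-in-time'' decomposition used for BSVIEs: partition $[0,b]$ by a finite mesh $0=b_0<b_1<\dots<b_m=b$ chosen according to the $\mathscr{L}^2(\Delta^*)$ structure of the kernels $L_{\xi_1},L_{\xi_2}$ (this is exactly why that function class, with its $\varepsilon$-smallness over subintervals, was introduced), and solve the equation on $[b_{m-1},b]$ first, then on $[b_{m-2},b_{m-1}]$ using the already-constructed solution on $[b_{m-1},b]$ as data, and so on. On the last subinterval $[b_{m-1},b]$, freeze $X(s)$, $\aleph(s,t)$ and their expectations coming from a candidate pair, so that the generator depends only on $\aleph(t,s)$ and $\mathbb{E}[\aleph(t,s)]$; then Corollary~\ref{c2} (with $\delta=b_{m-1}$) gives a unique adapted M-solution of the resulting equation of type \eqref{3.7}, and the stability estimate \eqref{3.8} controls it.

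Next I would set up the contraction. On a fixed subinterval $[b_{i-1},b_i]$ (with the solution on $[b_i,b]$ already fixed), define a map $\Theta$ sending a pair $(X,\aleph)$ — extended appropriately so that $\aleph(s,t)$ for $s>t$ is meaningful, using the already-solved part and the martingale-representation constraint defining the M-solution — to the unique adapted M-solution of the frozen-coefficient equation given by Corollary~\ref{c2}. Applying \eqref{3.8} to the difference of two outputs and using the Lipschitz bound $(A_1)$, the contribution of $L_{x_1},L_{x_2}\in L^2(\Delta^*)$ and of $L_{z_1},L_{z_2}$ (uniformly $L^2$ in $s$) produces a factor that is $O\big((b_i-b_{i-1})\big)$ or controlled by $\int_{b_{i-1}}^{b_i}\!\int_t^b(\cdots)^2\,ds\,dt$, while the $L_{\xi_1},L_{\xi_2}$ terms produce a factor bounded by the $\varepsilon$ from the $\mathscr{L}^2(\Delta^*)$ partition; choosing the mesh fine enough makes $\Theta$ a strict contraction in the norm
\[
\|(X,\aleph)\|^2 = \mathbb{E}\int_{b_{i-1}}^{b_i}|X(t)|_H^2\,dt + \mathbb{E}\int_{b_{i-1}}^{b_i}\int_{b_{i-1}}^{b}|\aleph(t,s)|_{L^2_0}^2\,ds\,dt .
\]
Banach's fixed point theorem then yields a unique adapted M-solution on $[b_{i-1},b_i]$, and patching the finitely many pieces gives the solution on $[0,b]$ in $H^2[0,b]$; uniqueness propagates because the mesh was determined by the data, not the solution.

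For the a priori bound and the stability estimate, I would run the same backward induction: on each subinterval apply \eqref{3.8} to the genuine solution against the zero (resp. the $\bar{F},\bar\Psi$) solution, expand the generator difference via $(A_1)$, absorb the small-coefficient terms into the left-hand side, and sum the resulting local estimates over $i=m,m-1,\dots,1$; the telescoping introduces the data on $[b_i,b]$ into the estimate on $[b_{i-1},b_i]$ but with a controlled constant, so finitely many iterations close the bound with a single constant $C=C(b,L,m)$. The stability estimate is obtained identically, with the extra source term being precisely $\mathbb{E}\int_0^b\int_t^b|P(\cdots)-\bar F(\cdots)|_H^2\,ds\,dt$ evaluated at $(\bar X,\bar\aleph)$, which is how the cross term appears on the right-hand side.

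The main obstacle I expect is the bookkeeping around the M-solution constraint $X(t)=\mathbb{E}[X(t)]+\int_0^t\aleph(t,s)\,dB_s$ across subinterval boundaries: one must verify that the pairs produced piecewise by Corollary~\ref{c2} glue into a globally defined $\aleph(t,s)$ on all of $(0,b)^2$ (not just on $\Delta^*$), that the frozen ``$\aleph(s,t)$'' argument fed into the generator on $[b_{i-1},b_i]$ is consistent with this global object, and that the mean-field terms $\mathbb{E}[X(s)],\mathbb{E}[\aleph(t,s)],\mathbb{E}[\aleph(s,t)]$ — being deterministic — do not spoil progressive measurability or the Fredholm-type solvability in Corollary~\ref{c1}. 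The mean-field dependence itself is mild because each $\mathbb{E}[\cdot]$ term is Lipschitz with the \emph{same} kernel as its non-averaged counterpart and $|\mathbb{E}[\zeta]-\mathbb{E}[\bar\zeta]|\le(\mathbb{E}|\zeta-\bar\zeta|^2)^{1/2}$, so it only doubles constants; the real care is in the measurability/adaptedness patching, which is exactly the content hidden behind the ``analogous to \cite{Yong}'' remark in the proof of Lemma~\ref{l1}.
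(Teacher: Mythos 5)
Your proposal follows essentially the same route as the paper: a partition of $[0,b]$ dictated by the $L^2(\Delta^*)$/$\mathscr{L}^2(\Delta^*)$ smallness of the Lipschitz kernels, a contraction built from Corollary~\ref{c2} with the $X(s)$, $\aleph(s,t)$ arguments frozen, the martingale-representation constraint to extend $\aleph(t,s)$ below the diagonal, Corollary~\ref{c1} for the Fredholm-type blocks, and backward induction over the subintervals for the a priori bound. Your stability argument (re-running the induction with the $P-\bar{F}$ source term) is only cosmetically different from the paper's device of absorbing that difference into a new free term $\hat{\Psi}$ and invoking the already-proved estimate, so the proposal is correct and matches the paper's proof in all essentials.
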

 \begin{proof}
 Initially, we introduce \(M^2[0, b]\) as the collection of all pairs \((x(\cdot), z(\cdot, \cdot))\) belonging to \(H^2[0, b]\) such that
     \begin{align*}
x(t)=\mathbb{E}x(s)+\int_{0}^{t}z(t,s)dB_{s},\quad a.e.\quad t\in [0,b].
 \end{align*}
 We define an equivalent norm for \(M^2[0, b]\) in the subsequent manner:
 \begin{align*}
     \Vert (x(\cdot),z(\cdot,\cdot))\Vert_{M^{2}[0,b]} \overset{\triangle}{=}\bigg[\mathbb{E}\int_{0}^{b}\vert x(t)\vert^{2}_{H}dt+\mathbb{E}\int_{0}^{b}\bigg(\int_{t}^{b}\vert z(t,s)\vert^{2}_{L^{0}_{2}}ds\bigg)dt\bigg]^{\frac{1}{2}}.
 \end{align*}

\textbf{ Step 1:}  Let \( (x(\cdot), z(\cdot, \cdot)) \in M^2[\delta, b] \) for some fixed \( \delta \) that is currently undetermined. We will now examine the subsequent equation:
 \begin{align}\label{r4}
X(t)&=\Psi(t)+\int_{t}^{b}P(t,s,x(s),\aleph(t,s),z(s,t),\mathbb{E}\big[x(s)\big], \mathbb{E}\big[\aleph(t,s)\big], \mathbb{E}\big[z(s,t)\big])ds-\int_{t}^{b}\aleph(t,s)dB_{s},
	\end{align}
for any \(\Psi(\cdot) \in L^2_{F_b}(\delta, b; H)\). According to Corollary \ref{c2}, we can observe that the equation \eqref{r4} has a unique adapted \(M\)-solution, denoted as \((X(\cdot), \aleph(\cdot, \cdot))\), and
 \begin{align*}
     &\quad  \mathbb{E}\Bigg\{\vert X(t)\vert^{2}_{H}+\int_{t}^{b}\vert \aleph(t,s)\vert^{2}_{L^{0}_{2}}ds\Bigg\}\\
     &\leq C\mathbb{E}\Bigg\{\vert \Psi(t)\vert^{2}_{H}+\bigg(\int_{t}^{b}\big\vert P(t,s,x(s),0,z(s,t),\mathbb{E}\big[x(s)\big],0,\mathbb{E}\big[z(s,t)\big])\big\vert_{H}ds\bigg)^{2}\Bigg\}\\
      &\leq C\mathbb{E}\Bigg\{\vert \Psi(t)\vert^{2}_{H}+\bigg(\int_{t}^{b}\Big[ L_{x_{1}}(t,s)\vert x(s)\vert_{H}+L_{\xi_{1}}(t,s)\vert z(s,t)\vert_{L^{0}_{2}}\\
      &+ L_{x_{2}}(t,s)\vert \mathbb{E}x(s)\vert_{H}+L_{\xi_{2}}(t,s)\vert \mathbb{E}z(s,t)\vert_{L^{0}_{2}}\Big]ds\bigg)^{2}\Bigg\}\\
       &\leq C\mathbb{E}\Bigg\{\vert \Psi(t)\vert^{2}_{H}+\int_{t}^{b} L_{x_{1}}(t,s)^{2}ds\int_{t}^{b}\vert x(s)\vert^{2}_{H}ds+\int_{t}^{b} L_{\xi_{1}}(t,s)^{2}ds\int_{t}^{b}\vert z(s,t)\vert^{2}_{L^{0}_{2}}ds\\
       &+\int_{t}^{b} L_{x_{2}}(t,s)^{2}ds\int_{t}^{b}\vert \mathbb{E}x(s)\vert^{2}_{H}ds+\int_{t}^{b} L_{\xi_{2}}(t,s)^{2}ds\int_{t}^{b}\vert \mathbb{E}z(s,t)\vert^{2}_{L^{0}_{2}}ds\Bigg\}.
 \end{align*}
 By integrating on $[\delta,b]$ w.r.t. t and using the Jensen's Inequality,  we derive that

 \begin{align}\label{38}
    \Vert (X(\cdot),\aleph(\cdot,\cdot))\Vert^{2}_{M^{2}[\delta,b]} &  \overset{\triangle}{=}\mathbb{E}\bigg\{\int_{\delta}^{b}\vert X(t)\vert^{2}_{H}dt+\int_{\delta}^{b}\bigg(\int_{t}^{b}\vert \aleph(t,s)\vert^{2}_{L^{0}_{2}}ds\bigg)dt\bigg\}\nonumber\\
    &\leq C  \mathbb{E} \bigg\{\int_{\delta}^{b}\vert \Psi(t)\vert^{2}_{H}dt+\int_{\delta}^{b}\int_{t}^{b}L_{x_{1}}(t,s)^{2}dsdt\int_{\delta}^{b}\vert x(t)\vert^{2}_{H}dt\nonumber\\
    &+\sup_{t\in [\delta,b]}\int_{t}^{b}L_{\xi_{1}}(t,s)^{2}ds+\int_{\delta}^{b}\int_{t}^{b}\vert z(s,t)\vert^{2}_{L^{0}_{2}} \, ds dt\nonumber\\
    &+\int_{\delta}^{b}\int_{t}^{b}L_{x_{2}}(t,s)^{2}dsdt\int_{\delta}^{b}\vert \mathbb{E}x(t)\vert^{2}_{H}dt\nonumber\\
    &+\sup_{t\in [\delta,b]}\int_{t}^{b}L_{\xi_{2}}(t,s)^{2}ds+\int_{\delta}^{b}\int_{t}^{b}\vert \mathbb{E}z(s,t)\vert^{2}_{L^{0}_{2}} \, ds dt\bigg\}\\
    &\leq 2C\Big(  \Vert L_{x_{1}}(\cdot,\cdot)\Vert^{2}_{L^{2}(\Delta^{*}[\delta,b])}+ \Vert L_{\xi_{1}}(\cdot,\cdot)\Vert^{2}_{L^{2}(\Delta^{*}[\delta,b])}\nonumber\\
    &+ \Vert L_{x_{2}}(\cdot,\cdot)\Vert^{2}_{L^{2}(\Delta^{*}[\delta,b])}+ \Vert L_{\xi_{2}}(\cdot,\cdot)\Vert^{2}_{L^{2}(\Delta^{*}[\delta,b])}\Big)\nonumber\\
    &\times\bigg[\mathbb{E}\int_{\delta}^{b}\vert \Psi(t)\vert^{2}_{H}dt+\Vert (x(\cdot),z(\cdot,\cdot))\Vert^{2}_{M^{2}[\delta,b]}\bigg].\nonumber
 \end{align}
We define the mapping \(\Theta: M^2[\delta, b] \rightarrow M^2[\delta, b]\) as follows:

\[
\Theta(x(\cdot), z(\cdot, \cdot)) = (X(\cdot), \aleph(\cdot, \cdot)), \quad \text{for all } (x(\cdot), z(\cdot, \cdot)) \in M^2[\delta, b].
\]

Next, we demonstrate that the map \(\Theta\) is contractive for some interval \(0 \leq \delta \leq b\). Consider another pair \((\bar{x}(\cdot), \bar{z}(\cdot, \cdot)) \in M^2[\delta, b]\) such that \(\Theta(\bar{x}(\cdot), \bar{z}(\cdot, \cdot)) = (\bar{X}(\cdot), \bar{\aleph}(\cdot, \cdot))\). By applying the stability estimate in Corollary \ref{c2},

 \[\begin{aligned}
     &\quad \mathbb{E}\int_{\delta}^{b}\vert X(t)-\bar{X}(t)\vert^{2}_{H}dt+\mathbb{E}\int_{\delta}^{b}\int_{t}^{b}\vert \aleph(t,s)-\bar{\aleph}(t,s)\vert^{2}_{L^{0}_{2}}dsdt\\
     &\leq C\mathbb{E}\int_{\delta}^{b}\bigg(\int_{t}^{b}\big\vert P(t,s,x(s),\aleph(t,s),z(s,t),\mathbb{E}\big[x(s)\big],\mathbb{E}\aleph(t,s),\mathbb{E}\big[z(s,t)\big])\\
     &-P(t,s,\bar{x}(s),\aleph(t,s),\bar{z}(s,t),\mathbb{E}\big[\bar{x}(s)\big],\mathbb{E}\aleph(t,s),\mathbb{E}\big[\bar{z}(s,t)\big])\big\vert_{H}ds\bigg)^{2}dt\\
      &\leq C\mathbb{E}\int_{\delta}^{b}\Big[  \int_{t}^{b} L_{x_{1}}(t,s)\vert x(s)-\bar{x}(s)\vert_{H}ds\Big]^{2}dt\\
      &+ C\mathbb{E}\int_{\delta}^{b}\Big[  \int_{t}^{b} L_{\xi_{1}}(t,s)\vert z(s,t)-\bar{z}(s,t)\vert_{L^{0}_{2}}ds\Big]^{2}dt\\
      &+C\mathbb{E}\int_{\delta}^{b}\Big[  \int_{t}^{b} L_{x_{2}}(t,s)\vert \mathbb{E}x(s)-\mathbb{E}\bar{x}(s)\vert_{H}ds\Big]^{2}dt\\
      &+ C\mathbb{E}\int_{\delta}^{b}\Big[  \int_{t}^{b} L_{\xi_{2}}(t,s)\vert \mathbb{E}z(s,t)-\mathbb{E}\bar{z}(s,t)\vert_{L^{0}_{2}}ds\Big]^{2}dt\\
       &\leq C\mathbb{E}\int_{\delta}^{b}\Big[  \int_{t}^{b} L_{x_{1}}(t,s)^{2}ds \int_{t}^{b}\vert x(s)-\bar{x}(s)\vert^{2}_{H}ds\Big]dt\\
        &+ C\mathbb{E}\int_{\delta}^{b}\Big[  \int_{t}^{b} L_{\xi_{1}}(t,s)^{2}ds\int_{t}^{b}\vert z(s,t)-\bar{z}(s,t)\vert^{2}_{L^{0}_{2}}ds\Big]dt\\
      &+C\mathbb{E}\int_{\delta}^{b}\Big[  \int_{t}^{b} L_{x_{2}}(t,s)^{2}ds \int_{t}^{b}\vert \mathbb{E}x(s)-\mathbb{E}\bar{x}(s)\vert^{2}_{H}ds\Big]dt\\
      \end{aligned}\]
      \[\begin{aligned}
      &+ C\mathbb{E}\int_{\delta}^{b}\Big[  \int_{t}^{b} L_{\xi_{2}}(t,s)^{2}ds\int_{t}^{b}\vert \mathbb{E}z(s,t)-\mathbb{E}\bar{z}(s,t)\vert^{2}_{L^{0}_{2}}ds\Big]dt\\
       &\leq C\int_{\delta}^{b}  \int_{t}^{b} L_{x_{1}}(t,s)^{2}dsdt  \mathbb{E} \int_{\delta}^{b}\vert x(s)-\bar{x}(s)\vert^{2}_{H}ds\\
       &+ C\sup_{t\in (\delta,b)} \int_{t}^{b} L_{\xi_{1}}(t,s)^{2}ds 
 \mathbb{E} \int_{\delta}^{b} \int_{\delta}^{t}\vert z(s,t)-\bar{z}(s,t)\vert^{2}_{L^{0}_{2}}dsdt\\
      &+C\int_{\delta}^{b}  \int_{t}^{b} L_{x_{2}}(t,s)^{2}dsdt  \mathbb{E} \int_{\delta}^{b}\vert \mathbb{E}x(s)-\mathbb{E}\bar{x}(s)\vert^{2}_{H}ds\\
      &+ C\sup_{t\in (\delta,b)} \int_{t}^{b} L_{\xi_{2}}(t,s)^{2}ds 
 \mathbb{E} \int_{\delta}^{b} \int_{\delta}^{t}\vert \mathbb{E}z(s,t)-\mathbb{E}\bar{z}(s,t)\vert^{2}_{L^{0}_{2}}dsdt\\
 &\leq C\bigg[\int_{\delta}^{b}  \int_{t}^{b} L_{x_{1}}(t,s)^{2}dsdt+\sup_{t\in (\delta,b)} \int_{t}^{b} L_{\xi_{1}}(t,s)^{2}ds\bigg]\mathbb{E}\int_{\delta}^{b}\vert x(s)-\bar{x}(s)\vert^{2}_{H}ds\\
 &+ C\bigg[\int_{\delta}^{b}  \int_{t}^{b} L_{x_{2}}(t,s)^{2}dsdt+\sup_{t\in (\delta,b)} \int_{t}^{b} L_{\xi_{2}}(t,s)^{2}ds  \bigg]\mathbb{E}\int_{\delta}^{b}\vert \mathbb{E}x(s)-\mathbb{E}\bar{x}(s)\vert^{2}_{H}ds\\
     &\leq C\bigg[\int_{\delta}^{b}  \int_{t}^{b} L_{x_{1}}(t,s)^{2}dsdt+\sup_{t\in (\delta,b)} \int_{t}^{b} L_{\xi_{1}}(t,s)^{2}ds\\
  &+\int_{\delta}^{b}  \int_{t}^{b} L_{x_{2}}(t,s)^{2}dsdt+\sup_{t\in (\delta,b)} \int_{t}^{b} L_{\xi_{2}}(t,s)^{2}ds  \bigg] \mathbb{E}\int_{\delta}^{b}\vert x(s)-\bar{x}(s)\vert^{2}_{H}ds.
 \end{aligned}\]

 Given that \(L_{x_{1}},L_{x_{2}} \in L^2(\Delta^*)\) and \(L_{\xi_1},L_{\xi_2} \in \mathscr{L}^2(\Delta^*)\), we can establish a partition \(\{b_i\}_{i=0}^{m}\) of the interval \([0, b]\), where \(0 = b_0 < b_1 < \ldots < b_m = b\).

\begin{align}\label{k8}
     &C\bigg[\int_{b_i}^{b_{i+1}}  \int_{t}^{b_{i+1}} L_{x_{1}}(t,s)^{2}\, ds\,dt+\sup_{t\in (b_i,b_{i+1})} \int_{t}^{b_{i+1}} L_{\xi_{1}}(t,s)^{2}ds\nonumber\\
     &+\int_{b_i}^{b_{i+1}}  \int_{t}^{b_{i+1}} L_{x_{2}}(t,s)^{2}\, ds\,dt+\sup_{t\in (b_i,b_{i+1})} \int_{t}^{b_{i+1}} L_{\xi_{2}}(t,s)^{2}ds  \bigg]\leq \frac{1}{4},
 \end{align}
 for $i=1,\dots,m-1$.
 In other words, if we set \(\delta = b_{m-1}\), we can rewrite the expression accordingly.

 \begin{align*}
     &C\bigg[\int_{\delta}^{b}  \int_{t}^{b} L_{x_{1}}(t,s)^{2}dsdt+\sup_{t\in (S,b)} \int_{t}^{b} L_{\xi_{1}}(t,s)^{2}ds\\
  &+\int_{\delta}^{b}  \int_{t}^{b} L_{x_{2}}(t,s)^{2}dsdt+\sup_{t\in (S,b)} \int_{t}^{b} L_{\xi_{2}}(t,s)^{2}ds  \bigg]\leq \frac{1}{4}
 \end{align*}

 As a result, the map \(\Theta\) admits a unique fixed point \((X(\cdot), \aleph(\cdot, \cdot)) \in M^2[\delta, b]\), which represents the unique solution to equation \eqref{r3} over the interval \([b_{m-1}, b]\). This step establishes the values of \((X(t), \aleph(t, s))\) for \((t, s) \in [b_{m-1}, b] \times [b_{m-1}, b]\). Furthermore, using results from \eqref{38} and \eqref{k8}, we can derive the subsequent estimate.
 
 \begin{align}\label{11}
     E \left\{ \int_{b_{m-1}}^{b} |X(t)|^2 H \, dt + \int_{b_{m-1}}^{b} \int_{t}^{b} |\aleph(t,s)|^2 L_0^2 \, ds \, dt \right\} \leq C E \int_{b_{m-1}}^{b} |\Psi(t)|^2_{H} \, dt.
 \end{align}
\bigskip

\textbf{Step 2:}
We impose the values \( \aleph(t, s) \) of \( \aleph(\cdot, \cdot) \) for \( (t, s) \in [b_{m-1}, b] \times [b_{m-2}, b_{m-1}] \) using the martingale representation theorem.

\[
E[X(t) | \mathcal{F}_{b_{m-1}}] = E[X(t) | \mathcal{F}_{b_{m-2}}] + \int_{b_{m-2}}^{b_{m-1}} \aleph(t, s) \, dB_{s}, \quad t \in [b_{m-1}, b].
\]
So

\[
E \int_{b_{m-2}}^{b_{m-1}} |\aleph(t, s)|^2_{L_0^2} \, ds \leq E|X(t)|^2_H, \quad t \in [b_{m-1}, b].
\]

Integrating over \( t \in [b_{m-1}, b] \) and using equation \eqref{11}, we obtain:

\begin{align}\label{12}
E \int_{b_{m-1}}^{b} \int_{b_{m-2}}^{b_{m-1}} |\aleph(t, s)|^2_{L_0^2} \, ds \, dt \leq E \int_{b_{m-1}}^{b} |X(t)|^2_H \, dt \leq C E \int_{b_{m-1}}^{b} |\Psi(t)|^2_H \, dt. 
\end{align}

Now, we have imposed \( (X(t), \aleph(t, s)) \) for \( (t, s) \in [b_{m-1}, b] \times [b_{m-2}, b] \).
Combining \eqref{11} and \eqref{12}, we get:

\begin{align}\label{13}
E \left\{ \int_{b_{m-1}}^{b} |X(t)|^2_H \, dt + \int_{b_{m-1}}^{b} \int_{b_{m-2}}^{b_{m-1}} |\aleph(t, s)|^2_{L_0^2} \, ds \, dt \right\} \leq C E \int_{b_{m-1}}^{b} |\Psi(t)|^2_H \, dt.
\end{align}

\bigskip

\textbf{Step 3:}   
For \( (t, s) \in [b_{m-2}, b_{m-1}] \times [b_{m-1}, b] \), we already established in Step 2 that the values of \( X(s) \) and \( \aleph(s, t) \) are known. Here, we concentrate on determining \( \aleph(t, s) \) for \( (t, s) \in [b_{m-2}, b_{m-1}] \times [b_{m-1}, b] \) by solving the corresponding stochastic Fredholm integral equation:

\begin{align}\label{w3}
\Psi^{b_{m-1}}(t) = \Psi(t) + \int_{b_{m-1}}^{b} F^{b_{m-1}}(t, s, \aleph(t, s),\mathbb{E}\aleph(t, s)) \, ds- \int_{b_{m-1}}^{b} \aleph(t, s) \, dB_{s}, \quad t \in [b_{m-2}, b_{m-1}], 
\end{align}

where

\begin{align*}
F^{b_{m-1}}(t, s, z,\mathbb{E}z) \equiv P(t, s, X(s), z, \aleph(s, t),\mathbb{E}X(s), \mathbb{E}z,\mathbb{E}\aleph(s,t)),\\
(t, s, z,\mathbb{E}z) \in [b_{m-2}, b_{m-1}] \times [b_{m-1}, b] \times L_0^2\times L_0^2.
\end{align*}

Thanks to Corollary \ref{c1}, equation \eqref{w3} has a unique adapted solution:

\[
(\Psi^{b_{m-1}}(\cdot), \aleph(\cdot, \cdot)) \in L^2_{\mathcal{F}_{b_{m-1}}}(b_{m-2}, b_{m-1}; H) \times L^2(b_{m-2}, b_{m-1}; L^2_{\mathcal{F}}(b_{m-1}, b; L_0^2)).
\]

Similar to equation \eqref{38}, the subsequent holds:

\begin{align*}
    &\quad\mathbb{E}\bigg\{  \vert \Psi^{b_{m-1}} \vert^{2}_{H}+ \int_{b_{m-1}}^{b} \vert \aleph(t,s)\vert^{2}_{L^{0}_{2}} ds\bigg\}\\
    &\leq C\mathbb{E}\Bigg\{  \vert \Psi \vert^{2}_{H}+ \bigg(\int_{b_{m-1}}^{b} \vert P(t,s,X(s),0,\aleph(s,t),\mathbb{E}X(s),0,\mathbb{E}\aleph(s,t))\vert_{H} ds\bigg)^{2}\Bigg\}\\
    &\leq CE \bigg\{ |\Psi(t)|^2 + \int_{b_{m-1}}^{b} L_{x_{1}}(t, s)^2 \, ds \int_{b_{m-1}}^{b} |X(s)|^2_H \, ds \\
    &+ \int_{b_{m-1}}^{b} L_{\xi_{1}}(t, s)^2 \, ds  \int_{b_{m-1}}^{b} |\aleph(s, t)|^2_{L^0} \, ds\\
    &+ \int_{b_{m-1}}^{b} L_{x_{2}}(t, s)^2 \, ds \int_{b_{m-1}}^{b} |\mathbb{E}X(s)|^2_H \, ds \\
    &+ \int_{b_{m-1}}^{b} L_{\xi_{2}}(t, s)^2 \, ds  \int_{b_{m-1}}^{b} |\mathbb{E}\aleph(s, t)|^2_{L^0} \, ds \bigg\}.
\end{align*}
 Therefore,

\begin{align}\label{98}
   & \quad \mathbb{E}\int_{b_{m-2}}^{b_{m-1}}\bigg\{  \vert \Psi^{b_{m-1}} \vert^{2}_{H}+ \int_{b_{m-1}}^{b} \vert \aleph(t,s)\vert^{2}_{L^{0}_{2}} ds\bigg\}dt\nonumber\\
   &\leq C\mathbb{E}\Bigg\{\int_{b_{m-2}}^{b_{m-1}}  \vert \Psi(t) \vert^{2}_{H}dt+ \int_{b_{m-1}}^{b}  \vert X(t)\vert^{2}_{H}dt+\int_{b_{m-2}}^{b_{m-1}} \int_{b_{m-1}}^{b}  \vert \aleph(s,t)\vert^{2}_{L^{0}_{2}}\,ds\, dt\\
   &+ \int_{b_{m-1}}^{b}  \vert \mathbb{E}X(t)\vert^{2}_{H}dt+\int_{b_{m-2}}^{b_{m-1}} \int_{b_{m-1}}^{b}  \vert \mathbb{E}\aleph(s,t)\vert^{2}_{L^{0}_{2}}dsdt\Bigg\}\leq C\mathbb{E}\int_{b_{m-2}}^{b_{m-1}}  \vert \Psi(t) \vert^{2}_{H}dt.\nonumber
\end{align}
 The final inequality satisfied by the result given in equation \eqref{13}.

 \bigskip

\textbf{Step 4}: We have successfully determined \(X(t)\) for \(t \in [b_{m-1}, b]\), as well as \(\aleph(t,s)\) for the regions \((t,s) \in ([b_{m-1}, b] \times [b_{m-2}, b]) \cup ([b_{m-2}, b_{m-1}] \times [b_{m-1}, b])\). We now focus on the equation 

\begin{align}\label{17}
X(t) &= \Psi^{b_{m-1}}(t) + \int_{t}^{b_{m-1}} P(t,s,X(s),\aleph(t,s),\aleph(s,t),\mathbb{E}X(s),\mathbb{E}\aleph(t,s),\mathbb{E}\aleph(s,t)) \, ds\nonumber\\
&- \int_{t}^{b_{m-1}} \aleph(t,s) dB_{s}, \quad t \in [b_{m-2}, b_{m-1}].
\end{align}

Since \(\Psi^{b_{m-1}}(\cdot)\) is measurable with respect to \(\mathcal{F}_{b_{m-1}}\), \eqref{17} represents a MF-BSVIE over the interval \([b_{m-2}, b_{m-1}]\). By applying the inequality from \eqref{k8} (by selecting \(b_i = b_{m-2}\) and \(b_{i+1} = b_{m-1}\)), and subsequent corresponding techniques as in Step 1, we can establish that \eqref{17} is solvable on the interval \([b_{m-2}, b_{m-1}]\). 
Moreover, according to \eqref{98}, we have 

\begin{align}\label{18}
&\quad \mathbb{E} \left\{ \int_{b_{m-2}}^{b_{m-1}} |X(t)|^2 \, H \, dt + \int_{b_{m-2}}^{b_{m-1}} \int_{b_{m-2}}^{b_{m-1}} |\aleph(t,s)|^2_{L^0_2} \, ds \, dt \right\}\nonumber \\
&\leq C\mathbb{E} \int_{b_{m-2}}^{b_{m-1}} |\Psi^{b_{m-1}}(t)|^2_{H}dt \leq C \mathbb{E} \int_{b_{m-2}}^{b} |\Psi(t)|^2_{H}dt.
\end{align}

This establishes the solvability of \((X(t), \aleph(t,s))\) for the region \((t,s) \in [b_{m-2}, b_{m-1}] \times [b_{m-2}, b_{m-1}]\). As a result, we achieve unique solvability for the MF-BSVIE \eqref{r3} on the interval \([b_{m-2}, b]\). The subsequent estimate can be derived by combining \eqref{13}, \eqref{98}, and \eqref{18}:

\[
\mathbb{E}  \left\{ \int_{b_{m-2}}^{b} |X(t)|^2_{H} dt + \int_{b_{m-2}}^{b} \int_{b_{m-2}}^{b} |\aleph(t,s)|^2_{L^0_2}ds \, dt \right\} \leq C \mathbb{E} \int_{b_{m-2}}^{b} |\Psi(t)|^2_{H}\, dt.
\]

Using corresponding techniques, we can finalize the proof through induction. We now present the stability estimate. Let \((X(\cdot), \aleph(\cdot, \cdot))\) and \((\overline{X}(\cdot), \overline{\aleph}(\cdot, \cdot))\) be adapted \(M\)-solutions of \eqref{r3} corresponding to \((F, \Psi)\) and \((\overline{F}, \overline{\Psi})\), respectively. We define 

\[
\hat{X}(t) \equiv X(t) - \overline{X}(t), \quad \hat{\aleph}(t,s) \equiv \aleph(t,s) - \overline{\aleph}(t,s).
\]

It can be observed that \(\hat{X}(\cdot)\) satisfies the subsequent MF-BSVIE:

\begin{align*}
\hat{X}(t) &= \hat{\Psi}(t) + \int_{t}^{b} \hat{F}(t,s,\hat{X}(s), \hat{\aleph}(t,s), \hat{\aleph}(s,t),\mathbb{E}\hat{X}(s), \mathbb{E}\hat{\aleph}(t,s), \mathbb{E}\hat{\aleph}(s,t)) \, ds - \int_{t}^{b} \hat{\aleph}(t,s) \, dB_{s},
\end{align*}

where 

\begin{align*}
\hat{\Psi}(t)\equiv \Psi(t)-\overline{\Psi}(t) &+\int_{t}^{b} \big[P(t,s,\overline{X}(s), \overline{\aleph}(t,s), \overline{\aleph}(s,t),\mathbb{E}\overline{X}(s), \mathbb{E}\overline{\aleph}(t,s), \mathbb{E}\overline{\aleph}(s,t))\\
&-\overline{F}(t,s,\overline{X}(s), \overline{\aleph}(t,s), \overline{\aleph}(s,t),\mathbb{E}\overline{X}(s), \mathbb{E}\overline{\aleph}(t,s), \mathbb{E}\overline{\aleph}(s,t))\big]ds,
\end{align*}

and 

\begin{align*}
&\quad \hat{F}(t,s,x_{1},z_1,\xi_{1}, x_2, z_2,\xi_{2}) \\
&\equiv P(t,s,x_{1}+\overline{X}(s),z_1+\overline{\aleph}(t,s),\xi_{1}+\overline{\aleph}(s,t),x_{2}+\mathbb{E}\overline{X}(s),z_2+\mathbb{E}\overline{\aleph}(t,s),\xi_{2}+\mathbb{E}\overline{\aleph}(s,t))\\
&- P(t,s,\overline{X}(s), \overline{\aleph}(t,s), \overline{\aleph}(s,t),\mathbb{E}\overline{X}(s), \mathbb{E}\overline{\aleph}(t,s), \mathbb{E}\overline{\aleph}(s,t)).
\end{align*}

It is straightforward to verify that the generator \(\hat{F}\) satisfies the condition in assumption $(A_{1})$. The subsequent stability estimate can be established:

\begin{align*}
&\quad \mathbb{E} \left\{ \int_{0}^{b} \vert \hat{X}(t)\vert^{2}_{H} dt + \int_{0}^{b} \int_{0}^{b} |\hat{\aleph}(t,s)|^2_{L_{0}^{2}}ds dt \right\} \leq C \mathbb{E} \left\{ \int_{0}^{b} |\hat{\Psi}(t)|^{2}_{H} dt \right\} \\
&\leq C \mathbb{E} \bigg\{ \int_{0}^{b} |\Psi(t)-\overline{\Psi}(t)|^2_{H}dt \\
&+ \int_{0}^{b} \bigg(\int_{t}^{b} \vert P(t,s,\overline{X}(s), \overline{\aleph}(t,s), \overline{\aleph}(s,t),\mathbb{E}\overline{X}(s),\mathbb{E} \overline{\aleph}(t,s), \mathbb{E}\overline{\aleph}(s,t)) \\
&-\overline{F}(t,s,\overline{X}(s), \overline{\aleph}(t,s), \overline{\aleph}(s,t),\mathbb{E}\overline{X}(s),\mathbb{E} \overline{\aleph}(t,s), \mathbb{E}\overline{\aleph}(s,t))\vert_{H} ds \bigg)^{2} dt \bigg\}.
\end{align*}
 \end{proof}
\section{Well-posedness of singular MF-FSVIEs}

In the previous section, we established the well-posedness of the singular MF-BSVIE \eqref{r3} using the concept of an adapted M-solution. We found that the singular assumption aligns naturally with the structure of the forward system, which is given by:

\begin{align}\label{4.1}
Y(t) &= \varphi(t) + \int_0^t \Phi(t, s, Y(s), \mathbb{E}[Y(s)]) \, ds+ \int_0^t \Psi(t, s, Y(s), \mathbb{E}[Y(s)]) \, dB_{s}, \quad t \in [0, b],
\end{align}

In this section, we address the solvability of \eqref{4.1}, subject to the conditions outlined below.
\bigskip

$(A_{3}):$
\bigskip

\textbf{$(a)$:} The mappings \( \Phi : \Delta \times H \times \Omega \times H \times \Omega \rightarrow H \) and \( \Psi :\Delta \times H \times \Omega \times H \times \Omega\rightarrow L^0_2 \) are measurable. For each \((t, x, \bar{x}) \in [0, b] \times H \times H\), the mapping \( s \to (\Phi(t, s, x, \bar{x}), \Psi(t, s, x, \bar{x})) \) is \(\mathcal{F}\)-adapted on \([0, t]\).
\bigskip

\textbf{$(b)$:} \( \Phi(t, s, 0, 0) = 0 \) and \( \Psi(t, s, 0, 0) = 0 \) for almost every \((t, s) \in \Delta\), a.s.
\bigskip

\textbf{$(c)$:} There exist functions \( K_1, K_2 \in L^2(\Delta) \) such that for all \( (t, s) \in \Delta \) and \( x, y, \bar{x}, \bar{y} \in H \), we have:
   \begin{align}\label{63}
   \begin{cases}
   |\Phi(t, s, x, \bar{x}) - \Phi(t, s, y, \bar{y})|_H \leq K_1(t, s) \big(|x - y|_H + |\bar{x} - \bar{y}|_H\big),\\
   |\Psi(t, s, x, \bar{x}) - \Psi(t, s, y, \bar{y})|_{L^0_2} \leq K_2(t, s) \big(|x - y|_H + |\bar{x} - \bar{y}|_H\big).
    \end{cases}
   \end{align}

For nonzero mean-field terms \( \Phi(t, s, 0, 0 )\) and \( \Psi(t, s, 0, 0) \), the system  be treated as follows. Actually, assume that $\Phi$ and $\Psi$ hold the Lipschitz condition in Assumption $(A_{3})$, and

\[
\mathbb{E}\int_{0}^{b}\left\{\int_{0}^{t} \vert \Phi(t,s,0,0)\vert_{H}ds\right\}^{2}dt<+\infty,\quad \mathbb{E}\int_{0}^{b}\int_{0}^{t} \vert \Psi(t,s,0,0)\vert^{2}_{L^{0}_{2}}dsdt<+\infty
\]

Then we rewrite \eqref{63} with the mean-field terms as:

\begin{align}\label{64}
Y(t)& = \varphi(t)+\int_0^t \Phi(t, s, 0,0) \, ds  + \int_0^t \tilde{\Phi}(t, s, Y(s), \mathbb{E}[Y(s)]) \, ds \nonumber\\
&+\int_0^t \Psi(t, s, 0,0) \, dB_{s}+ \int_0^t \tilde{\Psi}(t, s, Y(s), \mathbb{E}[Y(s)]) \, dB_{s}, \quad t \in [0, b],
\end{align}
where

\begin{align*}
    \tilde{\Phi}(t, s, y, \bar{y})=\Phi(t,s,y,\bar{y})-\Phi(t,s,0,0),\quad  \tilde{\Psi}(t, s, y, \bar{y})=\Psi(t,s,y,\bar{y})-\Psi(t,s,0,0).
\end{align*}
Such that,  \(\tilde{\Phi}(t, s, 0, 0) = 0\) and \(\tilde{\Psi}(t, s, 0, 0) = 0\), and $\tilde{\Phi}$ and $\tilde{\Psi}$ satisfy the assumption $(A_{3})$. Let the new free term be defined as:  

\[
\tilde{\varphi}(t) = \varphi(t) + \int_0^t \Phi(t, s, 0,0) \, ds + \int_0^t \Psi(t, s, 0,0) \, dB_{s}, \quad t \in [0, b],
\]

where \(\tilde{\varphi}(t)\) incorporates the original term \(\varphi(t)\) along with contributions from the baseline terms \(\Phi(t, s, 0,0)\) and \(\Psi(t, s, 0,0)\).  
Furthermore, we can demonstrate that \(\tilde{\varphi}(\cdot) \in L^2_{\mathcal{F}}(0, b; H)\) if \(\varphi(\cdot) \in L^2_{\mathcal{F}}(0, b; H)\).  
\begin{theorem}\label{t41}
Let condition $(A_{3})$ satisfy. Then, for every \( \varphi(\cdot) \in L^2_{\mathcal{F}}(0, b; H) \), the equation \eqref{63} admits a unique solution \( Y(\cdot) \in L^2_{\mathcal{F}}(0, b; H) \). Moreover, the subsequent estimate holds:
\begin{align}\label{67}
\| Y(\cdot) \|_{L^2_{\mathcal{F}}(0,T;H)} \leq C \| \varphi(\cdot) \|_{L^2_{\mathcal{F}}(0,T;H)}. 
\end{align}

Let \( \Phi^{\prime}\) and \( \Psi^{\prime} \) satisfy condition $(A_{3})$, and let \( \varphi^{\prime}(\cdot) \in L^2_{\mathcal{F}}(0, b; H) \) and \( Y^{\prime}(\cdot) \in L^2_{\mathcal{F}}(0, b; H) \) be the solution of the MF-SVIE \eqref{63} corresponding to \( (\varphi^{\prime}, \Phi^{\prime}, \Psi^{\prime}) \). Then, the subsequent estimate holds:
\begin{align*}
&\quad \bigg[\mathbb{E} \int_0^b \vert Y(t) - Y^{\prime}(t) \vert_H^2 \bigg]^{1/2}\, dt \leq C\bigg\{\bigg[ \mathbb{E} \int_0^b \vert \varphi(t) - \varphi^{\prime}(t) \vert_H^2 \, \\
&+  \int_0^t \vert \Phi(t, s, Y^{\prime}(s), \mathbb{E}[Y^{\prime}(s)]) - \Phi^{\prime}(t, s, Y^{\prime}(s), \mathbb{E}[Y^{\prime}(s)]) \vert_H^2 \, ds \\
&+   \int_0^t \vert \Psi(t, s, Y^{\prime}(s), \mathbb{E}[Y^{\prime}(s)]) - \Psi^{\prime}(t, s, Y^{\prime}(s), \mathbb{E}[Y^{\prime}(s)]) \vert_H^2 \, ds\bigg] dt\bigg\}^{1/2}.
\end{align*}
\end{theorem}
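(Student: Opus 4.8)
The plan is to prove Theorem~\ref{t41} by a standard contraction-mapping argument carried out on a finite partition of $[0,b]$, in exactly the spirit of Step~1 of the previous theorem but now for the forward equation. First I would fix a pair $(y(\cdot), \bar y(\cdot))$ with $y(\cdot)\in L^2_{\mathcal F}(0,b;H)$ (the mean-field term $\bar y(s)=\mathbb E[y(s)]$ is determined by $y$), substitute it into the coefficients of \eqref{64}, and define $Y(\cdot)$ by the resulting explicit forward Volterra formula
\[
Y(t)=\varphi(t)+\int_0^t\Phi(t,s,y(s),\mathbb E[y(s)])\,ds+\int_0^t\Psi(t,s,y(s),\mathbb E[y(s)])\,dB_s.
\]
Using assumption $(A_3)$ (measurability, adaptedness on $[0,t]$, the vanishing at zero, and the $L^2(\Delta)$-Lipschitz bounds with $K_1,K_2$), together with the It\^o isometry for the stochastic integral, the Cauchy--Schwarz inequality for the Lebesgue integral, and $|\mathbb E[y(s)]|_H\le (\mathbb E|y(s)|_H^2)^{1/2}$, one checks that $Y(\cdot)\in L^2_{\mathcal F}(0,b;H)$, so the map $\Theta: y(\cdot)\mapsto Y(\cdot)$ is well-defined on $L^2_{\mathcal F}(0,b;H)$.

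Next I would estimate $\|\Theta(y)-\Theta(y')\|^2_{L^2_{\mathcal F}(0,\delta;H)}$ for $0\le\delta\le b$. Subtracting the two defining formulas, applying It\^o's isometry to the stochastic-integral difference and Cauchy--Schwarz to the Lebesgue-integral difference, and invoking \eqref{63}, one gets a bound of the form
\[
\mathbb E\int_0^\delta|Y(t)-Y'(t)|_H^2\,dt\le C\Big(\int_0^\delta\!\int_0^t\big(K_1(t,s)^2+K_2(t,s)^2\big)\,ds\,dt\Big)\,\mathbb E\int_0^\delta|y(t)-y'(t)|_H^2\,dt,
\]
where the factor $|\mathbb E[y(s)]-\mathbb E[y'(s)]|_H\le (\mathbb E|y(s)-y'(s)|_H^2)^{1/2}$ is absorbed (doubling the constant). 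Since $K_1,K_2\in L^2(\Delta)$, the map $\delta\mapsto\int_0^\delta\int_0^t(K_1^2+K_2^2)\,ds\,dt$ is absolutely continuous and vanishes at $0$; hence one can choose a partition $0=b_0<b_1<\cdots<b_m=b$ so that on each subinterval $[b_i,b_{i+1}]$ the corresponding constant is $\le 1/2$. On $[0,b_1]$, $\Theta$ is then a contraction on $L^2_{\mathcal F}(0,b_1;H)$ and has a unique fixed point; one then solves successively on $[b_1,b_2],\dots,[b_{m-1},b]$, at each stage treating the already-constructed piece of $Y$ on $[0,b_i]$ as a known inhomogeneity (it contributes only to a new, but still square-integrable, free term, exactly as $\tilde\varphi$ was formed from $\varphi$). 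Patching the pieces yields the unique global solution $Y(\cdot)\in L^2_{\mathcal F}(0,b;H)$.

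For the a priori bound \eqref{67}, I would run the same estimate with $y'=0$, $\varphi'=0$ (noting $\Phi(t,s,0,0)=\Psi(t,s,0,0)=0$), getting on each subinterval $\|Y\|^2_{L^2(b_i,b_{i+1};H)}\le C\|\varphi\|^2_{L^2(b_i,b_{i+1};H)}+\tfrac12\|Y\|^2$, hence $\|Y\|^2\le 2C\|\varphi\|^2$ there, and then summing over the finitely many subintervals. For the stability estimate I would set $\hat Y=Y-Y'$, $\hat\varphi=\varphi-\varphi'$, observe that $\hat Y$ solves a forward MF-SVIE whose free term absorbs the difference $\Phi(t,s,Y',\mathbb E[Y'])-\Phi'(t,s,Y',\mathbb E[Y'])$ (and similarly for $\Psi,\Psi'$), verify this modified free term is in $L^2_{\mathcal F}$, and apply \eqref{67} to $\hat Y$; taking square roots gives the claimed inequality. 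The main obstacle is bookkeeping rather than conceptual: one must be careful that the stochastic convolution $\int_0^t\Psi(t,s,\cdot)\,dB_s$ depends on the upper limit $t$ both in the integrand kernel and in the domain, so the It\^o isometry must be applied for each fixed $t$ and only then integrated in $t$ (Fubini/Tonelli for the resulting nonnegative integrands), and that the partition constants involve $\int_0^\delta\int_0^t$ (forward orientation, $L^2(\Delta)$) rather than the $\int_\delta^b\int_t^b$ of the backward case — so the role of $\delta$ as left endpoint versus right endpoint must be tracked correctly when passing from one subinterval to the next.
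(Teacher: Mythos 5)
Your overall architecture (define the map $\Theta$, show it is a self-map, contract on small subintervals, iterate forward with an updated free term, then get \eqref{67} and the stability bound by absorbing coefficient differences into the free term) is the same as the paper's proof. However, there is a genuine gap in your key contraction estimate: the displayed inequality
\[
\mathbb E\int_0^\delta|Y(t)-Y'(t)|_H^2\,dt\le C\Big(\int_0^\delta\!\!\int_0^t\big(K_1(t,s)^2+K_2(t,s)^2\big)\,ds\,dt\Big)\,\mathbb E\int_0^\delta|y(t)-y'(t)|_H^2\,dt
\]
is not correct for the diffusion part. For the drift term, Cauchy--Schwarz in $s$ legitimately separates the kernel from the difference, which is why the factor $\int_0^\delta\int_0^t K_1^2\,ds\,dt$ appears. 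But for the stochastic integral, the It\^o isometry gives $\mathbb E\int_0^t K_2(t,s)^2\big(|y(s)-y'(s)|_H+|\bar y(s)-\bar y'(s)|_H\big)^2\,ds$ directly; integrating in $t$ and applying Fubini yields
\[
\mathbb E\int_0^\delta\Big(\int_s^\delta K_2(t,s)^2\,dt\Big)\big(\cdots\big)^2\,ds,
\]
and the weight $\int_s^\delta K_2(t,s)^2\,dt$ multiplies $|y(s)-y'(s)|_H^2$ pointwise in $s$. You cannot replace it by the double integral $\int_0^\delta\int_0^t K_2^2\,ds\,dt$: if $K_2(t,s)^2$ is concentrated (as a function of $s$) on a small set where $|y(s)-y'(s)|$ is also concentrated, the left-hand side exceeds the proposed right-hand side by an arbitrarily large factor, since the double integral averages the kernel mass in $s$ while the true bound sees its essential supremum. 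The correct contraction constant for this term is $\sup_{s\in(b_i,b_{i+1})}\int_s^{b_{i+1}}K_2(t,s)^2\,dt$, which is exactly what appears in the paper's partition condition \eqref{65}.

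Consequently your partition argument breaks down: absolute continuity of $\delta\mapsto\int_0^\delta\int_0^t K_2^2\,ds\,dt$ does not make $\sup_{s\in(b_i,b_{i+1})}\int_s^{b_{i+1}}K_2(t,s)^2\,dt$ small on a fine partition; that smallness is an additional property of the kernel (it is precisely the fine-partition condition built into the space $\mathscr L^2(\Delta)$ of Section 2) and is not implied by $K_2\in L^2(\Delta)$ alone. The same issue already affects your self-map step: to see that $\Theta$ maps $L^2_{\mathcal F}$ into itself you need $\operatorname{ess\,sup}_s\int_s^{\delta}K_2(t,s)^2\,dt<\infty$, not just square integrability of $K_2$ over the triangle. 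The fix is to run your argument with the $K_1$-part controlled by $\int_{b_i}^{b_{i+1}}\int_{b_i}^t K_1^2\,ds\,dt$ and the $K_2$-part controlled by $\sup_{s\in(b_i,b_{i+1})}\int_s^{b_{i+1}}K_2^2\,dt$, choosing the partition so that their sum is below the contraction threshold, as in \eqref{65}; the remainder of your proposal (induction over subintervals, the a priori bound \eqref{67}, and the stability estimate via the modified free term) then goes through as you describe.
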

\begin{proof}
    We apply the Banach fixed-point theorem to establish the existence and uniqueness of the adapted solution for equation \eqref{63}. In the steps that follow, we use the notation \(\bar{x} = \mathbb{E}x\) and \(\bar{y} = \mathbb{E}y\).

 \textbf{Step 1:}  For any \(\delta \in (0, b]\) and \(y(\cdot) ,\bar{y}(\cdot)\in L^2_{\mathcal{F}}(0, \delta; H)\), we define the mapping:

\[
S[y(\cdot)](t) = \varphi(t) + \int_0^t \Phi(t, s, y(s),\bar{y}(s)) \, ds + \int_0^t \Psi(t, s, y(s),\bar{y}(s)) \, dB_{s}, \quad t \in [0, \delta].
\]

From the definition of \(S[y(\cdot)](t)\), we derive the subsequent bound:

\begin{align}
&\quad\mathbb{E} \int_0^{\delta} |S[y(\cdot)](t)|_H^2 \, dt\nonumber\\
&\leq C\Bigg[ \mathbb{E} \int_0^{\delta} |\varphi(t)|_H^2 \, dt + \mathbb{E} \int_0^{\delta} \left( \int_0^t K_1(t, s) \left(|y(s)|_H+|\bar{y}(s)|_H\right) \, ds \right)^2 dt\nonumber \\
&+ \mathbb{E} \int_0^{\delta} \int_0^t K_2(t, s)^2 \left(|y(s)|_H+|\bar{y}(s)|_H\right)^2 \, ds \, dt\Bigg] \nonumber
\end{align}

Using Fubini's theorem and the Cauchy-Schwarz inequality, we further simplify:

\begin{align}\label{66}
& \quad\mathbb{E} \int_0^{\delta} |S[y(\cdot)](t)|_H^2 \, dt\nonumber\\
&\leq C \Bigg[ \mathbb{E} \int_{0}^{\delta} \vert\varphi(t)\vert_H^2 \, dt + \int_{0}^{\delta} \int_{0}^{t} K_1(t, s)^2 \,ds \,dt\, \mathbb{E} \int_{0}^{\delta} \left[ \left(|y(s)|_H + |\bar{y}(s)|_H\right)^2 \right]  ds\nonumber\\
&+ \mathbb{E}\int_{0}^{\delta} \int_{s}^{\delta} K_2(t, s)^2 \left[ \left(|y(s)|_H + |\bar{y}(s)|_H\right)^2 \right]   ds \,dt\Bigg]
\leq C \mathbb{E} \int_{0}^{\delta} |\varphi(t)|_H^2 \, dt\nonumber\\
&+ 2\left(\|K_1\|_{L^2(\Delta)} + \|K_2\|_{L^2(\Delta)}\right) \left(\mathbb{E} \int_{0}^{\delta} |y(t)|_H^2 \, dt+\mathbb{E} \int_{0}^{\delta} |\bar{y}(t)|_H^2 \, dt\right)\nonumber\\
&\leq C \mathbb{E} \int_{0}^{\delta} |\varphi(t)|_H^2 \, dt+ 4\left(\|K_1\|_{L^2(\Delta)} + \|K_2\|_{L^2(\Delta)}\right) \mathbb{E} \int_{0}^{\delta} |y(t)|_H^2 \, dt
\end{align}
 Hence, $S$ maps $L^2_{F}(0, \delta; H)$ to itself.
\bigskip

\textbf{Step 2:} 
Let \( b_{1} \in (0, b] \) be chosen later. Our goal is to demonstrate that the mapping \( S \) defined earlier is contractive on the interval \([0, b_{1}]\). For any \( x(\cdot), \bar{x}(\cdot), y(\cdot), \bar{y}(\cdot) \in L^{2}_{\mathcal{F}}(0, b_{1}; H) \), by applying Hölder's inequality and Fubini's theorem, we derive:

\begin{align*}
&\quad \|S[x(\cdot)] - S[y(\cdot)]\|_{L^2_{\mathcal{F}}(0, b_1; H)}^2 \\
&\leq 2 \mathbb{E} \int_0^{b_1} \left( \int_0^t K_1(t, s) \left(|x(s) - y(s)|_H+|\bar{x}(s) - \bar{y}(s)|_H\right) \, ds \right)^2 dt \\
&+ 2 \mathbb{E} \int_0^{b_1} \int_0^t K_2(t, s)^2 \left(|x(s) - y(s)|_H+|\bar{x}(s) - \bar{y}(s)|_H\right)^2 \, ds \, dt\\
&\leq 2 \int_0^{b_1} \int_0^t K_1(t, s)^2 \, ds \, dt \, \mathbb{E} \int_0^{b_1} \left(|x(s) - y(s)|_H+|\bar{x}(s) - \bar{y}(s)|_H\right)^2 \, ds \\
&+ 2 \mathbb{E} \int_0^{b_1} \int_s^{b_1} K_2(t, s)^2 \, dt  \,\left(|x(s) - y(s)|_H+|\bar{x}(s) - \bar{y}(s)|_H\right)^2 \, ds\\
&\leq 4 \int_0^{b_1}  \int_0^t K_1(t, s)^2 \, ds \, dt \, \mathbb{E} \int_0^{b_1} |x(s) - y(s)|^{2}_H\, ds \\
&+4 \int_0^{b_1}  \int_0^t K_1(t, s)^2 \, ds \, dt \, \mathbb{E} \int_0^{b_1} |\bar{x}(s) - \bar{y}(s)|_{H}^2 \, ds \\
&+ 4 \mathbb{E} \int_0^{b_1} \int_s^{b_1} K_2(t, s)^2 \, dt  |x(s) - y(s)|^{2}_{H} \, ds\\
&+ 4 \mathbb{E} \int_0^{b_1}  \int_s^{b_1} K_2(t, s)^2 \, dt  |\bar{x}(s) - \bar{y}(s)|^{2}_{H} \, ds\\
&\leq 8 \int_0^{b_1}  \int_0^t K_1(t, s)^2 \, ds \, dt \, \mathbb{E} \int_0^{b_1} |x(s) - y(s)|^{2}_H \, ds \\
&+ 8\, \mathbb{E} \int_0^{b_1} \int_s^{b_1} K_2(t, s)^2 \, dt \, |x(s) - y(s)|^{2}_{H} \, ds
\end{align*}

From the definitions of \( K_1 \) and \( K_2 \), we can construct a partition \( \{b_i\}_{i=0}^m \) of \([0, b]\) such that \( 0 = b_0 < b_1 < \cdots < b_m = b \), and for all \( i = 0, 1, \dots, m-1 \):
\begin{align}\label{65}
\int_{b_i}^{b_{i+1}} \int_{b_i}^t K_1(t, s)^2 \, ds \, dt + \sup_{s \in (b_i, b_{i+1})} \int_{s}^{b_{i+1}} K_2(t, s)^2 \, dt \leq \frac{1}{16}.
\end{align}

In particular, for \( i = 0 \), this implies:
\[
\int_0^{b_1} \int_0^t K_1(t, s)^2 \, ds \, dt + \sup_{s \in (0, b_1)} \int_s^{b_1} K_2(t, s)^2 \, dt \leq \frac{1}{16}.
\]

Thus, \( S \) is contractive on \( L^2_{\mathcal{F}}(0, b_1; H) \). Therefore, the equation in \eqref{63} has a unique solution \( Y_1(\cdot) \in L^2_{\mathcal{F}}(0, b_1; H) \) on \([0, b_1]\). Moreover, using the estimate in \eqref{66} and the condition in \eqref{65}, the solution \( Y(\cdot) \) satisfies the bound in \eqref{67}, with \( b\) replaced by \( b_1 \).

\bigskip

\textbf{Step 3:} We now rewrite equation \eqref{63} as:
\begin{align*}
Y(t) &= \hat{\varphi}(t) + \int_{b_1}^t \Phi(t, s, Y(s), \mathbb{E}[Y(s)]) \, ds + \int_{b_1}^t \Psi(t, s, Y(s), \mathbb{E}[Y(s)]) \, dB_{s}, \quad t \in [b_1, b],
\end{align*}
where
\begin{align*}
\hat{\varphi}(t)& = \varphi(t) + \int_0^{b_1} \Phi(t, s, Y_1(s), \mathbb{E}[Y_1(s)]) \, ds
+ \int_0^{b_1} \Psi(t, s, Y_1(s), \mathbb{E}[Y_1(s)]) \, dB_{s}, \quad t \in [b_1, b].
\end{align*}
It can be easily shown that \( \hat{\varphi}(t) \in L^2_{\mathcal{F}}(b_1, b; H) \). For any \( y(\cdot) \in L^2_{\mathcal{F}}(b_1, b_2; H) \), we define
\begin{align*}
S[y(\cdot)](t)& = \hat{\varphi}(t) + \int_{b_1}^t \Phi(t, s, y(s), \mathbb{E}[y(s)]) \, ds + \int_{b_1}^t \Psi(t, s, y(s), \mathbb{E}[y(s)]) \, dB_{s}, \quad t \in [b_1, b_2].
\end{align*}
By applying corresponding techniques as in Step 1 and 2, and considering the condition \eqref{65}, we can obtain the unique solution of equation \eqref{63} on the interval \([b_1, b_2]\). Repeating these steps, we can solve on the entire interval \([0, b]\), and the estimate \eqref{67} holds.

Now, define the difference \( \bar{Y}(\cdot) = Y(\cdot) - Y(\cdot) \), which belongs to \( L^2_{\mathcal{F}}(0, b; H) \), and satisfies the subsequent MF-SVIE:
\begin{align*}
\bar{Y}(t) &= \bar{\varphi}(t) + \int_{0}^{t} \bar{\Phi}(t, s, \bar{Y}(s), \mathbb{E}[\bar{Y}(s)]) \, ds + \int_{0}^{t} \bar{\Psi}(t, s, \bar{Y}(s), \mathbb{E}[\bar{Y}(s)]) \, dB_{s}, \quad t \in [0, b],
\end{align*}
where
\begin{align}\label{68}
\bar{\varphi}(t) = \varphi(t) - \varphi(t) + \int_0^t \left( \Phi(t, s, Y(s), \mathbb{E}[Y(s)]) - \Phi(t, s, Y(s), \mathbb{E}[Y(s)]) \right) \, ds\nonumber\\
+ \int_0^t \left( \Psi(t, s, Y(s), \mathbb{E}[Y(s)]) - \Psi(t, s, Y(s), \mathbb{E}[Y(s)]) \right) \, dB_{s},
\end{align}
and
\[
\bar{\Phi}(t, s, y, \mathbb{E}[y]) = \Phi(t, s, y, \mathbb{E}[y]) + Y(s) - \Phi(t, s, Y(s), \mathbb{E}[Y(s)]),
\]
\[
\bar{\Psi}(t, s, y, \mathbb{E}[y]) = \Psi(t, s, y, \mathbb{E}[y]) + Y(s) - \Psi(t, s, Y(s), \mathbb{E}[Y(s)]).
\]
Since \( \bar{\varphi}(t) \in L^2_{\mathcal{F}}(0, b; H) \) and \( \bar{A} \) and \( \bar{B} \) satisfy the Lipschitz condition $(A_{3})$ with constants \( K_1 \) and \( K_2 \), we apply estimate \eqref{67} to obtain:
\[
\mathbb{E} \int_0^b |\bar{Y}(t)|_H^2 \, dt \leq C \mathbb{E} \int_0^b |\bar{\varphi}(t)|_H^2 \, dt,
\]
which gives the stability estimate, including the mean field terms.
\end{proof}

\section{The Maximum Principle for Controlled Stochastic Volterra Integral Equations with a Mean-Field Term}  

This section explores the application of optimal control theory to controlled stochastic Volterra integral equations incorporating a mean-field term. Our goal is to derive the maximum principles for optimal control under the assumption of convex control domains. Drawing inspiration from fractional stochastic evolution equations and stochastic evolutionary integral equations, we examine an optimal control problem defined by a mean-field stochastic Volterra integral equation, with the corresponding cost functional presented in a Lagrangian form.
\bigskip

The maximum principle is a cornerstone of optimal control theory, offering necessary conditions for optimality in systems governed by stochastic dynamics. It establishes a critical connection between the dynamics of the state equation and the cost functional through an associated adjoint process, providing a framework for identifying optimal strategies. Its significance extends beyond theoretical foundations to practical applications across various domains:
\bigskip

 The maximum principle is pivotal in designing optimal control strategies for systems characterized by memory effects and mean-field interactions. Such systems are common in fields like engineering and robotics, where managing complex and dynamic systems effectively is essential to achieve long-term objectives.
\bigskip

By deriving the maximum principle for controlled mean-field stochastic Volterra integral equations, this study enhances the theoretical understanding and practical application of optimal control in systems influenced by both stochastic factors and memory-dependent behaviors. Future research is expected to generalize these results to more intricate systems and develop computational approaches for solving these equations in real-world scenarios.

\bigskip
We now proceed to the specific state equation and cost functional that define the optimal control problem. The state equation is given by:

\begin{align}\label{h1}
Y(t) &= \varphi(t) + \int_0^t \kappa(t, s, Y(s), \mathbb{E}[Y(s)],u(s)) \, ds + \int_0^t \nu(t, s, Y(s), \mathbb{E}[Y(s)], u(s)) \, dB_{s}, \quad t \in [0,b],
\end{align}

and the cost functional is expressed as:

\begin{align}\label{h2}
J(u(\cdot)) = \mathbb{E} \left[ \int_0^b g(t, Y(t), \mathbb{E}[Y(t)], u(t)) \, dt \right].
\end{align}
Here:
\begin{itemize}
    \item \( u(\cdot) \) represents the control process, taking values within a non-empty convex set \( U \), which is a subset of a separable metric space \( S \).
    \item \( Y(\cdot) \) denotes the state process, which evolves within the space \( H \).
    \item The function \( \varphi : [0, b] \to H \) acts as the free term in the system.
    \item The drift term is described by the function \( \kappa : [0, b] \times H \times H \times U \to H \), incorporating the interaction of time, state, and control.
    \item The diffusion term is modeled by \( \nu : [0, b] \times H \times H \times U \to L_0^2 \), representing stochastic effects in the system.
    \item The running cost \( g : [0, b] \times H \times H \times U \to \mathbb{R} \) represents the cost function, which includes the mean-field term \( \mathbb{E}[Y(t)] \).
\end{itemize}

We now introduce the subsequent assumptions related to the problem.

\bigskip

$(A_{3})$ Let \( \kappa \), \( \nu \), and \( g \) be continuously differentiable functions in the variables \( (y,\mathbb{E}y, u) \), where \( y \) and  $\mathbb{E}y$ are elements of \( H \) (a Hilbert space) and \( u \) is an element of \( U \) (a control space). Additionally, suppose that the Fréchet derivatives \( g_y \) with respect to \( y \), \( g_{\mathbb{E}y} \) with respect to \( \mathbb{E}y \)  and \( g_u \) with respect to \( u \) are bounded.
\bigskip

The subsequent conditions are assumed to hold:
\bigskip

$\bullet$ For every \( (t, s) \in \mathcal{A} \), \( y \in H \), and \( u \in U \), the terms \( |\kappa(t, s, 0,0, u)|_H + |\nu(t, s, 0,0, u)|_{L^2_0}\leq C \),
  \bigskip

$\bullet$ The derivatives of \( \kappa \) with respect to \( y \) and \( u \) satisfy the subsequent bounds:
  \[
  |\kappa_y(t, s, y,\mathbb{E}y, u)|_{L(H)} + |\kappa_u(t, s, y,\mathbb{E}y, u)|_{L(U; H)} \leq K_1(t, s),
  \]
  where \( K_1(t, s) \) is a function that belongs to \( L^2(\mathcal{A}) \),
\bigskip

$\bullet$ The derivatives of \( \nu \) with respect to \( y \) and \( u \) satisfy the subsequent bounds:
  \[
  |\nu_y(t, s, y,\mathbb{E}y, u)|_{L(H; L^2_0)} + |\nu_u(t, s, y,\mathbb{E}y, u)|_{L(U; L^2_0)} \leq K_2(t, s),
  \]
  where \( K_2(t, s) \) is also a square-integrable function over \( \mathcal{A} \). 
\bigskip

Let \( U[0, b] \subset L^2_{F}(0, b; U) \) represent the set of admissible controls. Under the condition $(A_{3})$, by Theorem 3.10, for every \( \varphi(\cdot) \in L^2_{F}(0, b; H) \) and \( u(\cdot) \in U[0, b] \), the equation (4.1) has a unique solution \( Y(\cdot) \in L^2_{F}(0, b; H) \). Therefore, the cost functional \( J(u(\cdot)) \) is well-defined, and the optimal control problem is formulated as outlined below:
\bigskip
\begin{table}[h!]
\renewcommand{\arraystretch}{1.5}
\centering
\begin{tabular}{|p{0.9\textwidth}|}
\hline
\textbf{Problem (V)}: The objective is to find a control function \( \bar{u}(\cdot) \in U[0, b] \) such that: \\[5pt]
\[
J(\bar{u}(\cdot)) = \inf_{u(\cdot) \in U[0,b]} J(u(\cdot)),
\]
where \( J(u(\cdot)) \) denotes the cost functional. The function \( \bar{u}(\cdot) \) that achieves this minimum is called the \textit{optimal control}. The corresponding state process, represented by \( \bar{Y}(\cdot) \), and the pair \( (\bar{Y}(\cdot), \bar{u}(\cdot)) \) are referred to as the \textit{optimal state} and the \textit{optimal pair}, respectively. \\ \hline
\end{tabular}
\end{table}
\begin{theorem}
 
Let \((\bar{X}(\cdot), \bar{u}(\cdot))\) be an optimal pair. Then, the subsequent MF-BSVIE admits a unique adapted \(M\)-solution \((X(\cdot), \aleph(\cdot, \cdot))\):   

\begin{align}\label{1}
  X(t)&=g_{y}(t,\bar{Y}(t),\mathbb{E}\bar{Y}(t),\bar{u}(t))+\mathbb{E}g_{\mathbb{E}y}(t,\bar{Y}(t),\mathbb{E}\bar{Y}(t),\bar{u}(t))\nonumber\\
  &+\int_{t}^{b}\Big(\kappa_{y}(t,s,\bar{Y}(s),\mathbb{E}\bar{Y}(s),\bar{u}(s))^{*}X(s)+\nu_{y}(t,s,\bar{Y}(s),\mathbb{E}\bar{Y}(s),\bar{u}(s))^{*}\aleph(s,t)Big)ds\nonumber\\
   &+\mathbb{E}\int_{t}^{b}\Big(\kappa_{\mathbb{E}y}(t,s,\bar{Y}(s),\mathbb{E}\bar{Y}(s),\bar{u}(s))^{*}X(s) +\nu_{\mathbb{E}y}(t,s,\bar{Y}(s),\mathbb{E}\bar{Y}(s),\bar{u}(s))^{*}\aleph(s,t)\Big)ds\nonumber\\
   &-\int_{t}^{b}\aleph(t,s)dB_{s},\quad t\in [0,b].
\end{align}
such that the subsequent inequality holds almost surely: 
\begin{align}\label{2}
&\Big\langle g_{u}(t, \bar{Y}(t),\mathbb{E}\bar{Y}(t), \bar{u}(t)) + \mathbb{E} \bigg[ \int_t^b \kappa_u(s, t, \bar{Y}(t),\mathbb{E}\bar{Y}(t), \bar{u}(t))^\ast X(s) \, ds\nonumber \\
&+ \int_t^b \nu_u(s, t, \bar{Y}(t),\mathbb{E}\bar{Y}(t), \bar{u}(t))^\ast \aleph(s, t) \, ds \mid \mathscr{F}_t \bigg](u - \bar{u}(t))\Big\rangle_{U} \geq 0,\quad \forall u\in U, \quad  t\in [0,b]\, a.s.
\end{align}
\end{theorem}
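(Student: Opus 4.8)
The plan is to use the classical convex-perturbation (spike-free) variational technique adapted to the mean-field Volterra setting. Fix an optimal pair $(\bar Y(\cdot),\bar u(\cdot))$ and an arbitrary admissible $u(\cdot)\in U[0,b]$. Since $U$ is convex, for $\epsilon\in[0,1]$ the control $u^{\epsilon}(\cdot)=\bar u(\cdot)+\epsilon\big(u(\cdot)-\bar u(\cdot)\big)$ is admissible; let $Y^{\epsilon}(\cdot)$ be the corresponding state from \eqref{h1}. First I would introduce the variational equation: the process $\xi(\cdot)$ solving the linear MF-SVIE obtained by formally differentiating \eqref{h1} in $\epsilon$ at $\epsilon=0$, namely
\begin{align*}
\xi(t)&=\int_0^t\Big[\kappa_y(t,s,\bar Y(s),\mathbb E\bar Y(s),\bar u(s))\xi(s)+\kappa_{\mathbb Ey}(t,s,\bar Y(s),\mathbb E\bar Y(s),\bar u(s))\mathbb E\xi(s)\\
&\qquad\qquad+\kappa_u(t,s,\bar Y(s),\mathbb E\bar Y(s),\bar u(s))(u(s)-\bar u(s))\Big]ds\\
&\quad+\int_0^t\Big[\nu_y(t,s,\bar Y(s),\mathbb E\bar Y(s),\bar u(s))\xi(s)+\nu_{\mathbb Ey}(t,s,\bar Y(s),\mathbb E\bar Y(s),\bar u(s))\mathbb E\xi(s)\\
&\qquad\qquad+\nu_u(t,s,\bar Y(s),\mathbb E\bar Y(s),\bar u(s))(u(s)-\bar u(s))\Big]dB_s,
\end{align*}
whose unique solvability in $L^2_{\mathcal F}(0,b;H)$ follows from Theorem \ref{t41} (the coefficients inherit $(A_3)$-type Lipschitz bounds via $K_1,K_2$, and the inhomogeneous terms are square-integrable because $\kappa_u,\nu_u$ are bounded by $K_1,K_2$ and $u-\bar u$ is in $L^2$). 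Then, using the $C^1$ assumptions on $\kappa,\nu$ and the stability estimate from Theorem \ref{t41}, I would establish the Taylor expansion $\mathbb E\int_0^b|Y^{\epsilon}(t)-\bar Y(t)-\epsilon\xi(t)|_H^2\,dt=o(\epsilon^2)$, which is the standard but somewhat technical convergence-of-remainder step.

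Next I would compute the Gâteaux derivative of the cost: from \eqref{h2} and the $C^1$, bounded-derivative hypotheses on $g$,
\begin{align*}
0\le\frac{d}{d\epsilon}J(u^{\epsilon}(\cdot))\Big|_{\epsilon=0^+}
=\mathbb E\int_0^b\Big[\big\langle g_y(t,\bar Y(t),\mathbb E\bar Y(t),\bar u(t)),\xi(t)\big\rangle_H
+\big\langle g_{\mathbb Ey}(t,\bar Y(t),\mathbb E\bar Y(t),\bar u(t)),\mathbb E\xi(t)\big\rangle_H\\
+\big\langle g_u(t,\bar Y(t),\mathbb E\bar Y(t),\bar u(t)),u(t)-\bar u(t)\big\rangle_U\Big]dt,
\end{align*}
the inequality coming from optimality of $\bar u$. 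Using $\mathbb E\langle g_{\mathbb Ey},\mathbb E\xi\rangle=\mathbb E\langle \mathbb Eg_{\mathbb Ey},\xi\rangle$, I rewrite the first two terms as $\mathbb E\int_0^b\langle g_y+\mathbb Eg_{\mathbb Ey},\xi(t)\rangle_H\,dt$, which is precisely the inner product of $\xi$ against the free term of the adjoint equation \eqref{1}.

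The crux is then a duality (integration-by-parts) identity between the forward variational equation for $\xi$ and the backward adjoint MF-BSVIE \eqref{1}, whose unique adapted $M$-solution $(X(\cdot),\aleph(\cdot,\cdot))$ exists by the Theorem in Section 3 (one checks its generator, built from the adjoint operators $\kappa_y^{*},\nu_y^{*},\kappa_{\mathbb Ey}^{*},\nu_{\mathbb Ey}^{*}$ times $K_1,K_2$-bounded kernels, satisfies $(A_1)$). Concretely, I would apply the Itô formula / the $M$-solution relation to $\langle X(t),\xi(t)\rangle_H$, integrate over $[0,b]$, use Fubini to swap the Volterra double integrals, and exploit that the $dB_s$-martingale parts integrate to zero in expectation and that the $M$-solution constraint $X(t)=\mathbb EX(s)+\int_0^t\aleph(t,s)dB_s$ ties the cross terms together. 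This should yield
\begin{align*}
\mathbb E\int_0^b\big\langle g_y+\mathbb Eg_{\mathbb Ey},\xi(t)\big\rangle_H\,dt
=-\,\mathbb E\int_0^b\Big\langle \mathbb E\Big[\int_t^b\kappa_u(s,t,\cdot)^{*}X(s)ds+\int_t^b\nu_u(s,t,\cdot)^{*}\aleph(s,t)ds\,\Big|\,\mathcal F_t\Big],\,u(t)-\bar u(t)\Big\rangle_U\,dt,
\end{align*}
so that the variational inequality becomes
$\mathbb E\int_0^b\langle \Xi(t),u(t)-\bar u(t)\rangle_U\,dt\ge0$, where $\Xi(t)$ is exactly the $\mathcal F_t$-conditional expression in \eqref{2}. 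Finally, since this holds for every admissible $u(\cdot)$, a standard measurable-selection / localization argument (taking $u(t)=\bar u(t)+\mathbb 1_E(t)(v-\bar u(t))$ for arbitrary $v\in U$ and $E\in\mathcal B([0,b])\otimes\mathcal F_b$ adapted) upgrades the integrated inequality to the pointwise a.s.\ statement \eqref{2}.

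I expect the main obstacle to be the duality/integration-by-parts step in the Volterra setting: unlike the SDE case, both $\xi$ and the adjoint process carry two time arguments through $\aleph(t,s)$ and $\aleph(s,t)$, the Fredholm-type double integrals must be reorganized carefully (the $N_i$-type transpositions of $(t,s)\mapsto(s,t)$ are where the $\kappa_u(s,t,\cdot)$ with swapped arguments in \eqref{2} originates), and the $M$-solution constraint rather than a plain Itô formula is what makes the cross terms cancel. The singular kernels require that every application of Cauchy–Schwarz/Fubini be compatible with the $L^2(\Delta)$, $\mathscr L^2(\Delta^{*})$ integrability built into $(A_1)$–$(A_3)$, but no new estimate beyond those already proved in Sections 3–4 should be needed.
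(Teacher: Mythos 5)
Your proposal follows essentially the same route as the paper's proof: convex perturbation $u^{\varepsilon}=\bar u+\varepsilon(v-\bar u)$, the linear variational MF-SVIE solved via Theorem \ref{t41}, passage to the limit in the difference quotient of the cost to obtain $I_1+I_2+I_3\ge 0$, a duality between the variational equation and the adjoint MF-BSVIE \eqref{1} carried out through the $M$-solution representation, the adjoint operators and Fubini (not a plain It\^o formula, as you correctly anticipate), and finally the arbitrariness of the perturbation to localize to the pointwise inequality \eqref{2}. The one correction needed is the sign in your displayed duality identity: the computation gives $\mathbb{E}\int_0^b\big\langle g_y(t)+\mathbb{E}g_{\mathbb{E}y}(t),\xi(t)\big\rangle_H\,dt=+\,\mathbb{E}\int_0^b\big\langle \int_t^b\kappa_u(s,t)^{*}X(s)\,ds+\int_t^b\nu_u(s,t)^{*}\aleph(s,t)\,ds,\;v(t)-\bar u(t)\big\rangle_U\,dt$ (as in the paper), and with your minus sign the variational inequality would produce $g_u-\mathbb{E}[\cdots\mid\mathscr{F}_t]$ instead of the $g_u+\mathbb{E}[\cdots\mid\mathscr{F}_t]$ appearing in \eqref{2}, contradicting your own stated conclusion.
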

\begin{proof}
To begin with, we introduce the subsequent abbreviations:

\begin{align*}
\begin{cases}
     \kappa_{y}(t, s) = \kappa_{y}(t, s,\bar{Y}(s), \mathbb{E}\bar{Y}(s),\bar{u}(s)), \\
      \kappa_{\mathbb{E}y}(t, s) = \kappa_{\mathbb{E}y}(t, s,\bar{Y}(s), \mathbb{E}\bar{Y}(s),\bar{u}(s)), \\
     \kappa_{u}(t, s) = \kappa_u(t, s,\bar{Y}(s), \mathbb{E}\bar{Y}(s),\bar{u}(s)), \\
\end{cases}
\begin{cases}
    \nu_y(t, s) = \nu_y(t, s,\bar{Y}(s), \mathbb{E}\bar{Y}(s),\bar{u}(s)),\\
     \nu_{\mathbb{E}y}(t, s) = \nu_{\mathbb{E}y}(t, s,\bar{Y}(s), \mathbb{E}\bar{Y}(s),\bar{u}(s)),\\ 
     \nu_{u}(t, s) = \nu_u(t, s,\bar{Y}(s), \mathbb{E}\bar{Y}(s),\bar{u}(s)), \\
\end{cases}
\end{align*}
\begin{align*}
    \begin{cases}
    g_{y}(t) = g_y(t, s,\bar{Y}(s), \mathbb{E}\bar{Y}(s),\bar{u}(s)), \\
      g_{\mathbb{E}y}(t) = g_{\mathbb{E}y}(t, s,\bar{Y}(s), \mathbb{E}\bar{Y}(s),\bar{u}(s)),\\
      g_{\bar{u}}(t) = g_u(t, s,\bar{Y}(s), \mathbb{E}\bar{Y}(s),\bar{u}(s)).
    \end{cases}
\end{align*}
Let \( (\bar{Y}(\cdot), \bar{u}(\cdot)) \) represent an optimal solution to Problem (V). For any control function \( v(\cdot) \in \mathscr{U}[0, b] \) and any real number \( \varepsilon \), define the perturbed control \( u_{\varepsilon}(\cdot) \) as:

\[
u^{\varepsilon}(\cdot) = \bar{u}(\cdot) + \varepsilon [v(\cdot) - \bar{u}(\cdot)]\in \mathscr{U}[0,b].
\]
Let \( Y^{\varepsilon}(\cdot) \) be the solution of equation \eqref{h1} when the control \( u(\cdot) \) is replaced by the perturbed control \( u^{\varepsilon}(\cdot) \). Then,
\begin{align*}
&\quad Y^{\varepsilon}(t) - \bar{Y}(t) 
= \int_0^t \bigg[ \tilde{\kappa}_y(t, s) [Y^{\varepsilon}(s) - \bar{Y}(s)] \\
&+ \tilde{\kappa}_{\mathbb{E}y}(t, s) [\mathbb{E}Y^{\varepsilon}(s) - \mathbb{E}\bar{Y}(s)]  + \tilde{\kappa}_{u}(t, s) \varepsilon [v(s) - \bar{u}(s)]\bigg] ds\\
&+\int_0^t \bigg[ \tilde{\nu}_y(t, s) [Y^{\varepsilon}(s) - \bar{Y}(s)]\\
&+ \tilde{\nu}_{\mathbb{E}y}(t, s) [\mathbb{E}Y^{\varepsilon}(s) - \mathbb{E}\bar{Y}(s)]  + \tilde{\nu}_{u}(t, s) \varepsilon [v(s) - \bar{u}(s)]\bigg] dB_{s},
\end{align*}

where

\[
\tilde{\kappa}_{y}(t,s) = \int_0^1 \kappa_y(t,s,\bar{Y}(s) + \gamma[\bar{Y}^\varepsilon(s) - \bar{Y}(s)],\mathbb{E}\bar{Y}(s) ,u^\varepsilon(s)) \, d\gamma,
\]
\[
\tilde{\kappa}_{\mathbb{E}y}(t,s) = \int_0^1 \kappa_y(t,s,\bar{Y}(s),\mathbb{E}\bar{Y}(s) + \beta[\mathbb{E}\bar{Y}^\varepsilon(s) - \mathbb{E}\bar{Y}(s)] ,u^\varepsilon(s)) \, d\gamma,
\]
\[
\tilde{\kappa}_{u}(t,s) = \int_0^1 \kappa_u(t,s,\bar{Y}(s),\mathbb{E}\bar{Y}(s) ,\bar{u}(s) + \theta\varepsilon v(s)]) \, d\theta,
\]
\[
\tilde{\nu}_{y}(t,s) = \int_0^1 \nu_y(t,s,\bar{Y}(s) + \gamma[\bar{Y}^\varepsilon(s) - \bar{Y}(s)],\mathbb{E}\bar{Y}(s) ,u^\varepsilon(s)) \, d\gamma,
\]
\[
\tilde{\nu}_{\mathbb{E}y}(t,s) = \int_0^1 \nu_y(t,s,\bar{Y}(s),\mathbb{E}\bar{Y}(s) + \beta[\mathbb{E}\bar{Y}^\varepsilon(s) - \mathbb{E}\bar{Y}(s)] ,u^\varepsilon(s)) \, d\gamma,
\]
\[
\tilde{\nu}_{u}(t,s) = \int_0^1 \nu_u(t,s,\bar{Y}(s),\mathbb{E}\bar{Y}(s) ,\bar{u}(s) + \theta\varepsilon v(s)]) \, d\theta.
\]

Using \eqref{67} from Theorem \ref{t41}, we obtain:

\begin{align*}
\Vert Y^{\varepsilon}(\cdot)- \bar{Y}(\cdot)\Vert^{2}_{L^{2}_{F}(0,T;H)}&=\mathbb{E}\int_{0}^{b}\bigg\vert \int_{0}^{t} \bigg[ \tilde{\kappa}_y(t, s) [Y^{\varepsilon}(s) - \bar{Y}(s)] \\
&+ \tilde{\kappa}_{\mathbb{E}y}(t, s) [\mathbb{E}Y^{\varepsilon}(s) - \mathbb{E}\bar{Y}(s)]  + \tilde{\kappa}_{u}(t, s) \varepsilon [v(s) - \bar{u}(s)]\bigg] ds\\
&+\int_0^t \bigg[ \tilde{\nu}_y(t, s) [Y^{\varepsilon}(s) - \bar{Y}(s)] + \tilde{\nu}_{\mathbb{E}y}(t, s) [\mathbb{E}Y^{\varepsilon}(s) - \mathbb{E}\bar{Y}(s)]  \\
&+ \tilde{\nu}_{u}(t, s) \varepsilon [v(s) - \bar{u}(s)]\bigg] dB_{s}\bigg\vert^{2}_{H}dt\leq C\varepsilon^{2}.
\end{align*}
Then, we obtain
\begin{align*}
    \lim_{\varepsilon\to 0}\Vert Y^{\varepsilon}(\cdot)- \bar{Y}(\cdot)\Vert^{2}_{L^{2}_{F}(0,T;H)}=o(\varepsilon) .
\end{align*}
Define
\begin{align*}
    Y^{\varepsilon}_{1}(t)=\frac{Y^{\varepsilon}(t)- \bar{Y}(t)}{\varepsilon},\quad t\in [0,b].
\end{align*}
The function \( Y^\varepsilon_{1}(\cdot) \) converges to \( Y_{1}(\cdot) \) in \( L^2_{\mathscr{F}}(0, b; H) \), where \( Y_{1}(\cdot) \) holds the subsequent equation:  

\begin{align}\label{3}
Y_{1}(t) &= \int_0^t \big[\kappa_y(t, s) Y_{1}(s)+\kappa_{\mathbb{E}y}(t, s) \mathbb{E}Y_{1}(s) + \kappa_u(t, s) \big(v(s) - \bar{u}(s)\big)\big] \, ds \nonumber\\
&+ \int_0^t \big[\nu_y(t, s) Y_{1}(s) +\nu_{\mathbb{E}y}(t, s) \mathbb{E}Y_{1}(s)+ \nu_u(t, s) \big(v(s) - \bar{u}(s)\big)\big] \, dB_{s}\\
 &= \bar{\varphi}(t) + \int_0^t \kappa_y(t, s) Y_{1}(s) \, ds + \int_0^t \nu_y(t, s) Y_{1}(s) \, dB_{s}\nonumber\\
&+ \int_0^t \kappa_{\mathbb{E}y}(t, s) \mathbb{E}Y_{1}(s) \, ds + \int_0^t \nu_{\mathbb{E}y}(t, s) \mathbb{E}Y_{1}(s)dB_{s}, \quad t \in [0, b],\nonumber
\end{align}

where \( \bar{\varphi}(t) \) represents the remaining terms of the equation after separating the integral components.

\begin{align*}
    \bar{\varphi}(t)=\int_0^t  \kappa_{u}(t, s) \big(v(s) - \bar{u}(s)\big) \, ds + \int_0^t  \nu_{u}(t, s) \big(v(s) - \bar{u}(s)\big) \, dB_{s},
\end{align*}

By the optimality of \((\bar{Y}(\cdot), \bar{u}(\cdot))\), we have the variational inequality:  

\begin{align*}
   & 0\leq \frac{J(u^{\varepsilon}(\cdot))-J(\bar{u}(\cdot))}{\varepsilon}\\
&=\mathbb{E}\int_{0}^{b}\bigg[\Big\langle  g_{y}(t,\bar{Y}(t)+\gamma (Y^{\varepsilon}(t)-\bar{Y}(t)) , \mathbb{E}\bar{Y}(t), u^{\varepsilon}(t), \frac{Y^{\varepsilon}(t)-\bar{Y}(t)}{\varepsilon}\Big\rangle_{H}\\
    &+\Big\langle  g_{\mathbb{E}y}(t,\bar{Y}(t), \mathbb{E}\bar{Y}(t)+\beta (\mathbb{E}Y^{\varepsilon}(t)-\mathbb{E}\bar{Y}(t)) , u^{\varepsilon}(t), \frac{\mathbb{E} Y^{\varepsilon}(t)-\mathbb{E} \bar{Y}(t)}{\varepsilon}\Big\rangle_{H}\\
    &+\Big\langle  g_{\mathbb{E}y}(t,\bar{Y}(t), \mathbb{E}\bar{Y}(t), \bar{u}+\theta \varepsilon (v(t)-\bar{u}(t)), v(t)-\bar{u}(t))\Big\rangle_{U}\bigg]dt\\
    &\to \mathbb{E}\int_{0}^{b} \big[\langle g_{y}(t),Y_{1}(t)\rangle_{H}+\langle g_{\mathbb{E}y}(t),\mathbb{E}Y_{1}(t)\rangle_{H}+\langle g_{u}(t), v(t)-\bar{u}(t))\rangle_{U}\big]dt\\
    &=J_{1}+J_{2}+J_{3},
\end{align*}

where $\gamma,\beta,\theta\in (0,1)$ and 

\[
I_1 \coloneqq \mathbb{E} \int_0^b \big\langle g_{y}(t), Y_1(t) \big\rangle_{H} \, dt, \quad I_2 \coloneqq \mathbb{E} \int_0^b \big\langle g_{\mathbb{E}y}(t), \mathbb{E}Y_1(t) \big\rangle_{H} \, dt,\quad  
I_3 \coloneqq  \mathbb{E}  \int_0^b \big\langle g_u(t), (v(t) - \bar{u}(t)) \big\rangle_{U} \, dt.
\]

To eliminate the term involving \( Y_1(\cdot) \), we obtain 

\[\begin{aligned}
    &\quad \mathbb{E}\int_{0}^{b}\langle \bar{\varphi}(t), X(t)\rangle_{H} dt\\
    &=\mathbb{E}\int_{0}^{b}\Big[\Big\langle Y_{1}(t) -\int_0^t \kappa_y(t, s) Y_{1}(s) \, ds
    - \int_0^t \nu_y(t, s) Y_{1}(s) \, dB_{s}\\
&- \int_0^t \kappa_{\mathbb{E}y}(t, s) \mathbb{E}Y_{1}(s) \, ds - \int_0^t \nu_{\mathbb{E}y}(t, s) \mathbb{E}Y_{1}(s)dB_{s}, X(t) \Big\rangle_{H} \Big]\, dt\\
&=\mathbb{E}\int_{0}^{b}\Big[\langle Y_{1}(t), X(t)\rangle_{H}-\int_0^t \langle \kappa_y(t, s) Y_{1}(s) , X(t)\rangle_{H}\, ds-\int_0^t \langle \nu_y(t, s) Y_{1}(s) , \aleph(t,s)\rangle_{L^0_{2}}\, ds\\
&-\int_0^t \langle \kappa_{\mathbb{E}y}(t, s) \mathbb{E}Y_{1}(s) , X(t)\rangle_{H}\, ds-\int_0^t \langle \nu_{\mathbb{E}y}(t, s)\mathbb{E} Y_{1}(s) , \aleph(t,s)\rangle_{L^0_{2}}\, ds\Big]dt\\
&=\mathbb{E}\int_{0}^{b}\Big\langle Y_{1}(t), X(t)\Big\rangle_{H}dt-\mathbb{E}\int_{0}^{b}\Big\langle Y_{1}(t), \int_{t}^{b}\kappa_{y}(s,t)^{*}X(s)ds \Big\rangle_{H}dt\\
&-\mathbb{E}\int_{0}^{b}\Big\langle Y_{1}(t), \int_{t}^{b}\nu_{y}(s,t)^{*}\aleph(s,t)ds \Big\rangle_{H}dt-\mathbb{E}\int_{0}^{b}\Big\langle \mathbb{E}Y_{1}(t), \int_{t}^{b}\kappa_{\mathbb{E}y}(s,t)^{*}X(s)ds \Big\rangle_{H}dt\\
&-\mathbb{E}\int_{0}^{b}\Big\langle \mathbb{E}Y_{1}(t), \int_{t}^{b}\nu_{\mathbb{E}y}(s,t)^{*}\aleph(s,t)ds \Big\rangle_{H}dt=\mathbb{E}\int_{0}^{b}\Big\langle Y_{1}(t), X(t)\Big\rangle_{H}dt\\
&-\mathbb{E}\int_{0}^{b}\Big\langle Y_{1}(t), \int_{t}^{b}\kappa_{y}(s,t)^{*}X(s)ds \Big\rangle_{H}dt-\mathbb{E}\int_{0}^{b}\Big\langle Y_{1}(t), \int_{t}^{b}\nu_{y}(s,t)^{*}\aleph(s,t)ds \Big\rangle_{H}dt\\
&-\mathbb{E}\int_{0}^{b}\Big\langle Y_{1}(t), \mathbb{E}\int_{t}^{b}\kappa_{\mathbb{E}y}(s,t)^{*}X(s)ds \Big\rangle_{H}dt-\mathbb{E}\int_{0}^{b}\Big\langle Y_{1}(t), \mathbb{E}\int_{t}^{b}\nu_{\mathbb{E}y}(s,t)^{*}\aleph(s,t)ds \Big\rangle_{H}dt\\
&=\mathbb{E}\int_{0}^{b}\Big\langle Y_{1}(t), X(t)-\int_{t}^{b}\kappa_{y}(s,t)^{*}X(s)ds-\int_{t}^{b}\nu_{y}(s,t)^{*}\aleph(s,t)ds\\
&-\mathbb{E}\int_{t}^{b}\kappa_{\mathbb{E}y}(s,t)^{*}X(s)ds-\mathbb{E}\int_{t}^{b}\nu_{\mathbb{E}y}(s,t)^{*}\aleph(s,t)ds\Big\rangle_{H}dt\\
&=\mathbb{E}\int_{0}^{b}\Big\langle Y_{1}(t), g_{y}(t)+\mathbb{E}g_{\mathbb{E}y}(t)-\int_{t}^{b}\aleph(t,s)dB_{s}\Big\rangle_{H}dt\\
&=\mathbb{E}\int_{0}^{b}\Big\langle Y_{1}(t), g_{y}(t)\Big\rangle_{H}dt+\mathbb{E}\int_{0}^{b}\Big\langle Y_{1}(t), \mathbb{E} g_{\mathbb{E}y}(t)\Big\rangle_{H}dt\\
&=\mathbb{E}\int_{0}^{b}\Big\langle Y_{1}(t), g_{y}(t)\Big\rangle_{H}dt+\mathbb{E}\int_{0}^{b}\Big\langle \mathbb{E}Y_{1}(t), g_{\mathbb{E}y}(t)\Big\rangle_{H}dt.
\end{aligned}\]

The equality above illustrates the duality principle between equations \eqref{1} and \eqref{3}. The initial equality stems directly from equation \eqref{3}. By applying the definitions of the M-solution and the adjoint operator, along with Fubini’s theorem, we derive the second and third equalities. The fifth equality is obtained directly from equation \eqref{1}. Finally, the properties of stochastic integrals validate the last equality. Using the definition of \(\bar{\varphi}(\cdot)\), we can then conclude the result.

\begin{align*}
I_{1}+I_{2}&=\mathbb{E}\int_{0}^{b}\Big\langle Y_{1}(t), g_{y}(t)\Big\rangle_{H}dt+\mathbb{E}\int_{0}^{b}\Big\langle \mathbb{E}Y_{1}(t), g_{\mathbb{E}y}(t)\Big\rangle_{H}dt=\mathbb{E}\int_{0}^{b}\langle \bar{\varphi}(t), X(t)\rangle_{H} dt\\
&=\mathbb{E}\int_{0}^{b}\Big\langle \int_{t}^{b}\kappa_{u}(s,t)^{*}X(s)ds+\int_{t}^{b}\nu_{u}(s,t)^{*}\aleph(s,t)ds, v(t)-\bar{u}(t)\Big\rangle_{U}dt.
\end{align*}
As a result, we get
\begin{align*}
    0\leq I_{1}+I_{2}+I_{3}=\mathbb{E}\int_{0}^{b}\Big\langle g_{u}(t)+\int_{t}^{b}\kappa_{u}(s,t)^{*}X(s)ds+\int_{t}^{b}\nu_{u}(s,t)^{*}\aleph(s,t)ds, v(t)-\bar{u}(t)\Big\rangle_{U}dt
\end{align*}
 which implies \eqref{2} since $v(\cdot)$ is arbitrary.
\end{proof}

\section{Conclusion}

This paper provides a comprehensive analysis of the well-posedness of singular MF-BSVIEs in infinite-dimensional spaces. By establishing key lemmas and conditions for the existence and uniqueness of adapted M-solutions, we offer a solid framework for understanding these complex equations.  
\bigskip  

The main findings of this study include:  
\bigskip  

$\bullet$ The existence and uniqueness of adapted M-solutions for MF-BSVIEs, derived from suitable regularity conditions on the coefficients.  
\bigskip  

$\bullet$ An extension of the theoretical analysis to singular MF-FSVIEs, demonstrating the existence of unique adapted solutions.  
\bigskip  

$\bullet$ The development of fundamental lemmas that offer the necessary tools for solving these equations in infinite-dimensional spaces.  
\bigskip  

$\bullet$ Applications to control theory, where the results can be applied to establish stochastic maximum principles.  
\bigskip  

The framework introduced in this paper opens new possibilities for future research in mean-field stochastic equations, particularly in control systems and financial models. Future work will focus on more specific applications and the development of computational methods for solving these equations in real-world contexts. In particular, the regularity of equation \eqref{r0} remains an open problem for future exploration.  

Additionally, future research will extend to the well-posedness of mean-field backward doubly stochastic singular Volterra integral equations. These equations, characterized by their interplay of backward stochastic dynamics, doubly stochastic components, and singular kernels, present unique analytical and computational challenges. Investigating their regularity, existence, and uniqueness properties will provide deeper insights into the behavior of such systems in infinite-dimensional spaces.  

The regularity of singular MF-BSVIEs in infinite-dimensional spaces is especially important due to the complexities associated with the infinite-dimensional environment. Unlike finite-dimensional spaces, where regularity can typically be established using well-known methods, infinite-dimensional spaces introduce unique challenges due to the interaction between stochastic processes and the underlying functional spaces. The singular nature of these equations, often characterized by singular kernels or degenerate behaviors, further complicates the analysis.  

Investigating the regularity of these systems is crucial for understanding the well-posedness of solutions, their stability, and their relevance in real-world applications like control theory and financial modeling, where the dynamics are inherently nonlinear and complex. Gaining insights into this regularity could also lead to more efficient numerical methods for solving these equations in practical scenarios. Future research into MF-BDSVIEs will likely reveal new methods for addressing these challenges and broaden the applications of mean-field stochastic equations.
	
\end{document}